\documentclass{mwart}
\usepackage[utf8]{inputenc}
\usepackage[all]{xy}

\usepackage{makeidx}

\usepackage{amsmath,amssymb,amsthm}

\usepackage{authblk}

\usepackage{amsthm}
\newtheorem{theorem}{Theorem}[section]
\newtheorem{proposition}[theorem]{Proposition}
\newtheorem{lemma}[theorem]{Lemma}

\newtheorem{definition}[theorem]{Definition}
\newtheorem{remark}[theorem]{Remark}

\newtheorem{dovermann-suh}[theorem]{Dovermann--Suh Conjecture}

\newtheorem{smith theory}[theorem]{Smith Theory Conclusions}



\def\mbb#1{\mathbb{#1}} 
\def\msl#1{\text{\sl #1}} 
\def\msf#1{\text{\sf #1}}

\usepackage{xparse}

\NewDocumentCommand{\I}{o}{
\IfNoValueTF{#1}{%
\ensuremath{\mathbb{I}}
}{%
\ensuremath{\mathbb{I}^{#1}}}}

\NewDocumentCommand{\R}{o}{
\IfNoValueTF{#1}{%
\ensuremath{\mathbb{R}}
}{%
\ensuremath{\mathbb{R}^{#1}}}}

\usepackage{tikz, tikz-cd}
\usetikzlibrary{matrix,decorations.pathmorphing,arrows}
\tikzcdset{
arrow style=tikz,
diagrams={>={Stealth[scale=0.9]}}
}

\usepackage[toc,page]{appendix}

\NewDocumentCommand{\C}{o}{
\IfNoValueTF{#1}{%
\ensuremath{\mathbb{C}}
}{%
\ensuremath{\mathbb{C}^{#1}}}}

\NewDocumentCommand{\Z}{o}{
\IfNoValueTF{#1}{%
\ensuremath{\mathbb{Z}}
}{%
\ensuremath{\mathbb{Z}^{#1}}}}

\title{Fixed point sets of smooth \(G\)-manifolds\\ pseudo-equivalent to a \(G\)-template}

\author{Krzysztof M. Pawa{\l}owski and Jan Pulikowski}

\makeindex

\usepackage{epigraph}

\makeatletter

\makeatother

\makeatletter
\@addtoreset{equation}{section}
\makeatother



\usepackage{url}

\setcounter{section}{-1}


\begin{document}
\maketitle

\abstract{For a finite group \(G\) not of prime power order, Oliver \cite{Oliver 1996} has answered the question
which manifolds occur as the fixed point sets of smooth actions of \(G\) on disks (resp., Euclidean spaces). 
We extend Oliver's result to compact (resp., open) smooth \(G\)-manifolds \(M\) pseudo-equivalent to \(Y\), a finite 
\(\mbb{Z}\)-acyclic \(G\)-CW complex  such that the fixed point set \(Y^G\)  is non-empty, connected, and 
\(\chi(Y^G) \equiv 1 \pmod{n_G}\), where \(n_G\) is the Oliver number of \(G\). We prove that the answer 
to the question above does not depend on the choice of \(Y\).

For a finite connected \(G\)-CW complex \(Y\) such that \(Y^G\) is non-empty and connected, called a \emph{\(G\)-template}, 
we prove that a compact stably parallelizable manifold \(F\) occurs as the fixed point set \(M^G\) of a compact smooth \(G\)-manifold \(M\)
pseudo-equivalent to \(Y\), if and only if \(\chi(F) \equiv \chi(Y^G) \pmod{n_G}\). Moreover, there exists a compact 
smooth fixed point free \(G\)-manifold pseudo-equivalent to a \(G\)-template \(Y\), if and only if \(\chi(Y^G) \equiv 0 \pmod{n_G}\). 
In particular, similarly as for actions on disks, there exists a compact smooth fixed point free \(G\)-manifold
pseudo-equivalent to the real projective space \(\mbb{R}{\rm P}^{2n}\)\! for an integer \(n \geq 1\), if and only if 
\(G\) is an Oliver group. Finally, we prove that each finite Oliver group \(G\) has a smooth fixed point free action on 
\(\mbb{R}{\rm P}^{2n}\)\! itself for some integer \(n \geq 1\).}

\medskip
\noindent 
\emph{\msl{2020} Mathematics Subject Classification}. Primary: 57S17, 57S25. 

\smallskip
\noindent
\emph{Key words}. Finite group,  smooth action, fixed point set,  fixed point free action, \(G\)-CW complex, \(G\)-vector bundle, \(G\)-template,
pseudo-equivalence.

\section{Introduction}
\label{sec: introduction}

In the theory of transformation groups, a fundamental role plays the study of smooth actions of compact Lie groups on smooth manifolds.
It is well-known that if a compact Lie group \(G\) acts smoothly on a smooth manifold \(M\), the set of points in \(M\) fixed under the action of \(G\),
\[
M^G = \{x \in M \colon \ g x = x  \ \text{for each} \ g \in G\},
\]
is a smooth submanifold of \(M\) (see, e.g., \cite[Corollary 2.5, p. 309]{Bredon 1972}. Moreover, if  \(M\) is compact, so is the manifold \(F = M^G\), 
and if the boundary \(\partial M\) of \(M\) is empty, \(\partial F\) is empty too. More generally, \(F \cap \partial M = \partial F\) by the Slice Theorem 
(see, e.g., \cite[Corollary 2.4, p. 308]{Bredon 1972}).

Another restrictions on manifolds \(F\) occurring as the fixed points sets arise from the Smith Theory (see \cite[Chapter III]{Bredon 1972}). 
A particular result asserts that if a finite \(p\)-group \(G\) (for any prime \(p\)) acts smoothly on some \(\mbb{F}_p\)-acyclic smooth manifold \(M\), 
then the manifold \(F = M^G\) is also \(\mbb{F}_p\)-acyclic.\footnote{Henceforth, \(\mbb{F}_p\) denotes the field consisting of \(p\) elements.}

A breakthrough step toward obtaining the converse statements to the Smith Theory goes back to 
Jones \cite{Jones 1971}.  For \(G = C_p\), the cyclic group of order \(p\) (where \(p\) is any prime), and a finite \(\mbb{F}_p\)-acyclic CW complex \(F\), 
Jones \cite[Theorem~1.1]{Jones 1971} has constructed a finite contractible \(G\)-CW complex \(X\) such that \(X^G = F\). 
Also, assuming that \(F\) is a compact \(\mbb{F}_p\)-acyclic stably complex submanifold of a disk \(D\) with 
a closed tubular neighborhood \(N\) of \(F\) in \(D\), upon which \(G\) acts smoothly so that \(N^G = F\), 
Jones \cite[Section~3]{Jones 1971} has described a method (which relies on \(G\)-equivariant handle addition) for extending of 
the action of \(G\) on \(N\) to a smooth action of \(G\) on \(D\) such that \(D^G = F\).

Let \(G\) be a finite \(p\)-group. The Smith Theory and the results of Jones \cite{Jones 1971} combined with induction arguments 
show that a compact smooth manifold \(F\) is the fixed point set of a smooth action of \(G\) on a disk, if and anly if \(F\) is both
\(\mbb{F}_p\)-acyclic and stably complex.\footnote{Each \(\mbb{F}_2\)-acyclic manifold is stably complex (see \cite[the proof of (3.2)]{Edmonds-Lee 1975}).}
Similarly, a smooth manifold \(F\) with \(\partial F = \varnothing\) is the fixed point set of a smooth action of \(G\) on some Euclidean space,  
if and only if \(F\) is both \(\mbb{F}_p\)-acyclic and stably complex (see \cite[Theorem A]{Pawalowski 1989}). These results are generalized 
in the article \cite{Pawalowski-Pulikowski 2019}, and in Theorem~\ref{thm: Y p-acyclic} below, we recall the conclusions. 
First, we introduce some definitions.

For a finite group \(G\), a \(G\)-map \(f \colon X \to Y\) between two \(G\)-CW complexes \(X\)~and \(Y\) is called a \emph{pseudo-equivalence}
if \(f\) is a homotopy equivalence, but not necessarily a \(G\)-homotopy equivalence. If \(G\) acts trivially on \(Y\), such \(f\) factors through 
the orbit space \(X/G\).  If \(X\) \emph{is pseudo-equivalent to} \(Y\), i.e., there exists a pseudo-equivalence  \(f \colon X \to Y\), we write \(X \simeq_{/G} Y\).

A finite connected \(G\)-CW complex \(Y\) is said to be a \emph{\(G\)-template} if the fixed point set \(Y^G\) is non-empty and connected.
In the special case where \(Y^G = Y\) (i.e., \(G\) acts trivially on \(Y\)), \(Y\) is said to be a \emph{trivial \(G\)-template}.

In this paper, our goal is to answer the question which smooth manifolds \(F\) are diffeomorphic to the fixed point sets \(M^G\) of smooth \(G\)-manifolds  
\(M \simeq_{/G} Y\) for a \(G\)-template \(Y\), and we reach the goal for specific \(G\)-templates \(Y\).

In \cite{Pawalowski-Pulikowski 2019}, we have answered the question above for any finite \(p\)-group \(G\) and any \(\mbb{F}_p\)-acyclic \(G\)-template \(Y\) 
such that \(Y^G = \msl{pt}\). The answer is restated below (cf., \cite[Theorem~A]{Pawalowski-Pulikowski 2019}). If \(Y = \msl{pt}\), 
the result goes back to \cite{Jones 1971} (resp., \cite{Pawalowski 1989}), where \(M\) can be chosen to be a disk (resp., Euclidean space).

\begin{theorem}\label{thm: Y p-acyclic}
Let \(G\) be a finite \(p\)-group and let \(Y\) be an \(\mbb{F}_p\)-acyclic \(G\)-template such that \(Y^G = \msl{pt}\).
Then a smooth manifold \(F\) is diffeomorphic to the fixed point set \(M^G\) of a compact $($resp., open$)$ stably complex \(G\)-manifold \(M \simeq_{/G} Y\), 
if and only if $F$ is compact, \(\mbb{F}_p\)-acyclic, and stably complex  $($resp.,  \(\partial F = \varnothing\) and \(F\) is \(\mbb{F}_p\)-acyclic and stably complex$)$.
\end{theorem}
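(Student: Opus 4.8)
The plan is to establish the two implications separately. The ``only if'' direction combines Smith Theory with an observation about complex \(G\)-bundles; the ``if'' direction is an equivariant handlebody construction built on the one Jones \cite{Jones 1971} used for \(Y = \msl{pt}\).

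For necessity, let \(M\) be a compact (resp.\ open) stably complex \(G\)-manifold with a pseudo-equivalence \(f \colon M \to Y\) and put \(F = M^G\). If \(M\) is compact, then \(F\), being a closed submanifold of a compact manifold, is compact; if \(\partial M = \varnothing\), the Slice Theorem gives \(\partial F = F \cap \partial M = \varnothing\). Since \(f\) is a homotopy equivalence and \(Y\) is \(\mbb{F}_p\)-acyclic, \(M\) is \(\mbb{F}_p\)-acyclic, so by Smith Theory \cite[Ch.~III]{Bredon 1972} (applied inductively, \(G\) being a finite \(p\)-group) the fixed point set \(F = M^G\) is \(\mbb{F}_p\)-acyclic, in particular non-empty. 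Finally, a \(G\)-invariant complex structure on \(\tau_M \oplus \underline{W}\) (for a suitable \(G\)-module \(W\)) restricts over \(F\) to a \(G\)-invariant complex structure whose fixed subbundle is a complex structure on \((\tau_M|_F)^G \oplus \underline{W^G} = \tau_F \oplus \underline{W^G}\); hence \(F\) is stably complex.

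For sufficiency, given \(F\) compact (resp.\ with \(\partial F = \varnothing\)), \(\mbb{F}_p\)-acyclic and stably complex, I would build \(M\) in two stages. First I would fix an equivariant model with the correct fixed point set: take \(M_1 = F \times D(V)\) (resp.\ \(M_1 = F \times V\)), with \(G\) acting trivially on \(F\) and linearly on the disk of a fixed-point-free complex \(G\)-module \(V\); then \(M_1^G = F\), and since \(F\) is stably complex and \(V\) is a complex \(G\)-module, the bundle \(\tau_{M_1} \oplus \underline{\mathbb{C}^{k}}\) is a complex \(G\)-bundle, so \(M_1\) is a stably complex \(G\)-manifold --- but \(M_1 \simeq F\), which need not be of the homotopy type of \(Y\). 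Then I would graft the homotopy type of \(Y\) onto \(M_1\) away from \(F\), by attaching equivariant handles \(D^{a} \times D^{b} \times D(U)\), with \(U\) a fixed-point-free complex \(G\)-module, along \(G\)-tubes in \(\partial M_1 \setminus F\) (resp.\ in \(M_1 \setminus F\)), so that the resulting manifold \(M\) admits a pseudo-equivalence onto \(Y\). Such handles miss \(F\), so \(M^G = F\); choosing \(U\) and the handle cores compatibly with the complex structure keeps \(M\) stably complex; and which handles to attach is dictated by a finite \(G\)-CW complex \(X \simeq_{/G} Y\) with \(X^G = F\), obtained at the \(\mathbb{Z}G\)-chain level by attaching to \(F\) \(G\)-cells whose isotropy groups are properly contained in \(G\). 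This is Jones's equivariant construction, generalized from a point to a template as in \cite{Pawalowski-Pulikowski 2019}; the ``open'' statement follows, as in \cite{Pawalowski 1989}, by running the same construction with \(M_1 = F \times V\) and passing to interiors.

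The main obstacle is this second stage: producing the finite \(G\)-CW complex \(X \simeq_{/G} Y\) with \(X^G = F\), and thickening it to a smooth stably complex \(G\)-manifold without perturbing the fixed point set. One must show that the \(\mathbb{Z}G\)-chain-level obstructions to building the homotopy type of \(Y\) out of \(F\) by attaching \(G\)-cells of isotropy properly contained in \(G\) vanish --- here the \(\mbb{F}_p\)-acyclicity of both \(F\) and \(Y\), the hypothesis \(Y^G = \msl{pt}\) (so that at the \(\mbb{F}_p\)-homology level the fixed point set need only be \(\mbb{F}_p\)-acyclic, which it is), and \(G\) being a \(p\)-group are all essential --- and then that the resulting equivariant handle data can be realized with fixed-point-free complex normal \(G\)-modules, so that the final \(G\)-manifold \(M\) is stably complex and carries the required pseudo-equivalence onto \(Y\).
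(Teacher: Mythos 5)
Your necessity argument is sound and is exactly what the paper relies on: compactness and $\partial F = F \cap \partial M$ are general facts, $\mbb{F}_p$-acyclicity of $F$ is Smith Theory applied to the $\mbb{F}_p$-acyclic manifold $M \simeq Y$, and the stable complex structure on $F$ comes from the $G$-fixed subbundle of an equivariant stable complex structure on $TM$ restricted to $F$ (this is Proposition~\ref{} of Edmonds--Lee, quoted as Proposition 3.1 in the paper; note it does require reading ``stably complex $G$-manifold'' as an \emph{equivariant} stable complex structure, which is the intended meaning here).

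The sufficiency direction, however, has a genuine gap, and you name it yourself: you never produce the finite $G$-CW complex $X \simeq_{/G} Y$ with $X^G = F$ that is supposed to dictate the handle attachments. You reduce everything to showing that ``the $\mbb{Z}G$-chain-level obstructions to building the homotopy type of $Y$ out of $F$ vanish,'' assert that the hypotheses are essential for this, and stop. That obstruction-theoretic step is precisely the content of the theorem, so asserting it is circular. Moreover, the hypothesis $Y^G = \msl{pt}$ makes this step essentially free, and that is the idea you are missing: let $X_0$ be a finite contractible $G$-CW complex with $X_0^G = F$ carrying a complex $G$-vector bundle $E_0$ with $(E_0|_F)^G \approx TF \oplus \mbb{R}^n_{\times F}$ (Jones's construction for $C_p$, extended to $p$-groups by induction --- this is where $F$ being finite, $\mbb{F}_p$-acyclic, and stably complex enters), and set $X = X_0 \vee Y$, wedged at a point of $F$ and the unique fixed point of $Y$. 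Then $X^G = F$, the collapse $X \to X/X_0 = Y$ is a $G$-map and a homotopy equivalence (so $X \simeq_{/G} Y$ with no obstruction theory over $Y$ at all), and pulling $E_0$ back along the other collapse $X \to X_0$ gives a $G$-vector bundle over $X$ to which the equivariant thickening theorem applies, yielding the desired stably complex $M$ with $M^G = F$. This wedge-plus-pullback architecture is exactly how the paper proves the analogous Theorem 5.2 for groups not of prime power order; without it (or an actual verification of your chain-level claim), your second stage is a plan, not a proof.
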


Now, for a finite group \(G\) not of prime power order, and a \(G\)-template \(Y\), we  focus on answering the question 
which smooth manifolds \(F\) are the fixed point sets \(M^G\) of smooth \(G\)-manifolds \(M \simeq_{/G}Y\).

Oliver \cite{Oliver 1975} proved that once \(G\) is a finite group not of prime power order, there exists an integer \(n_G \geq 0\) such that 
a finite CW complex \(F\) is the fixed point set \(X^G\) of a finite contractible \(G\)-CW complex \(X\), if and only if the Euler  characteristic 
\(\chi(F)\) satisfies the condition that \(\chi(F) \equiv 1 \pmod{n_G}\).

The integer \(n_G\) is defined by showing that the set
\[
\{\chi(X^G) - 1 \colon \text{for all finite contractible \(G\)-CW complexes \(X\)}\}
\]
is a subgroup of \(\mbb{Z}\), the group of integers, and thus, it has the form \(n_G \cdot \mbb{Z}\) for a unique integer \(n_G \geq 0\).
We refer to \(n_G\) as to the \emph{Oliver number} of \(G\).

For a finite group \(G\) not of prime power order, and any finite dimensional, countable CW complex \(F\), Oliver \cite{Oliver 1996} 
has determined (implicitly) a subgroup of \(\widetilde{KO}(F)\), the reduced real \(K\)-theory of \(F\), which depends on some 
algebraic properties of the group \(G\).  In Section~\ref{sec: tangent bundle}, we define this subgroup of \(\widetilde{KO}(F)\) and henceforth, 
we denote it by \(\widetilde{KO}(F,G)\).

First, we answer the question which smooth manifolds \(F\) are the fixed point sets \(M^G\) of smooth \(G\)-manifolds \(M \simeq_{/G} Y\) for 
a \(\mbb{Z}\)-acyclic \(G\)-template \(Y\) such that \(\chi(Y^G) \equiv 1 \pmod{n_G}\); e.g., such that \(Y^G = \msl{pt}\) or \(Y\) is trivial.
For \(Y = \msl{pt}\), smooth \(G\)-manifolds \(M \simeq_{/G} Y\) are contractible and the result below goes back to Oliver \cite{Oliver 1996}, 
where \(M\) can be chosen to be a disk (resp., Euclidean space).

\begin{theorem}\label{thm: Y acyclic}
Let \(G\) be a finite group not of prime power order, and let~\(Y\) be a \(\mbb{Z}\)-acyclic \(G\)-template such that \(\chi(Y^G) \equiv 1 \pmod{n_G}\). 
Then a smooth manifold \(F\) is diffeomorphic to the fixed point set \(M^G\)\! of a compact $($resp., open$)$ 
smooth \(G\)-manifold \(M \simeq_{/G} Y\), if and only if \(F\) is compact, \(\chi(F) \equiv \chi(Y^G) \pmod{n_G}\), and \([TF] \in \widetilde{KO}(F,G)\)
$($resp., \(\partial F = \varnothing\) and \([TF] \in \widetilde{KO}(F,G)\)$)$.
\end{theorem}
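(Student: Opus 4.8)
The plan is to prove both implications, carrying out the compact case in detail and reducing the open case to it. For \emph{necessity}, let $M$ be a compact (resp.\ open) smooth $G$-manifold admitting a pseudo-equivalence $f\colon M\to Y$, and set $F=M^G$. Since $f$ is an ordinary homotopy equivalence and $Y$ is $\mbb{Z}$-acyclic, $M$ is $\mbb{Z}$-acyclic; in particular $\widetilde{KO}(M)=0$. Compactness of $F$ (resp.\ $\partial F=\varnothing$) is immediate from the Slice Theorem, as recalled in Section~\ref{sec: introduction}. For the Euler characteristic I would triangulate the compact $\mbb{Z}$-acyclic $G$-manifold $M$ as a finite $G$-CW complex and invoke Oliver's analysis of $n_G$ for finite $\mbb{Z}$-acyclic $G$-CW complexes (\cite{Oliver 1975}) to obtain $\chi(F)=\chi(M^G)\equiv 1\equiv\chi(Y^G)\pmod{n_G}$. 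For the tangent bundle, restrict $TM$ to $F$: one has $TM|_F\cong TF\oplus\nu$, where $\nu=\nu(F\hookrightarrow M)$ is a $G$-vector bundle over $F$ (with trivial $G$-action on the base) such that $\nu^G=0$; since $\widetilde{KO}(M)=0$ this yields $[TF]=-[\nu]$ in $\widetilde{KO}(F)$, and this class lies in $\widetilde{KO}(F,G)$ by the definition of that subgroup given in Section~\ref{sec: tangent bundle}.

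For \emph{sufficiency in the compact case}, assume $F$ is compact, $\chi(F)\equiv\chi(Y^G)\pmod{n_G}$, and $[TF]\in\widetilde{KO}(F,G)$. For $Y=\msl{pt}$ this is precisely Oliver's theorem \cite{Oliver 1996}, which produces a smooth action of $G$ on a disk with fixed point set diffeomorphic to $F$; the plan is to extend his construction so as to build in an arbitrary $\mbb{Z}$-acyclic $G$-template $Y$. I would first form a $G$-invariant closed disk-bundle model $N$ over a copy of $F$ carrying the trivial $G$-action --- the disk bundle of a $G$-vector bundle over $F$ having no nonzero $G$-fixed vectors in its fibres, chosen, using $[TF]\in\widetilde{KO}(F,G)$ exactly as Oliver does, so that the $K$-theoretic and gap conditions needed for equivariant surgery hold; thus $N^G=F$ and $N\simeq F$ non-equivariantly. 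After stabilising if necessary (for instance by replacing $N$ with $N\times D(V)$ for a suitable real $G$-module $V$ with $V^G=0$, so that the complement becomes high-dimensional and every isotropy group occurring off $N$ is a proper subgroup of $G$), I would attach to the boundary a finite family of equivariant handles of the form $G/H\times(D^a\times D^b)$ with $H\subsetneq G$: one part of the family, guided by a $G$-CW structure of $Y$ and smoothed by equivariant general position, realises the orbits of the cells of $Y$ lying off $Y^G$, while the remaining handles, together with their cancelling partners, serve, as in Oliver's argument, to adjust the homology of $N$ so that the resulting compact smooth $G$-manifold $M$ acquires the homotopy type of $Y$. Because every added handle has isotropy strictly smaller than $G$, one has $M^G=N^G=F$ throughout, and a $G$-map $g\colon M\to Y$ realising the homotopy equivalence can be produced cell-by-handle along the way. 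The single obstruction to completing the homological adjustment is Oliver's: it lives in a quotient of a Swan-type group, is detected by an Euler characteristic modulo $n_G$, and the quantity it must absorb here is $\chi(F)-\chi(Y^G)$ --- so the hypothesis $\chi(F)\equiv\chi(Y^G)\pmod{n_G}$ is exactly what makes it vanish.

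For the \emph{open case}, given $F$ with $\partial F=\varnothing$ and $[TF]\in\widetilde{KO}(F,G)$, I would exhaust $F$ by compact codimension-zero submanifolds with boundary, apply a relative (bordism) version of the compact construction to successive pieces --- where no Euler characteristic restriction intervenes --- and pass to the increasing union, along the lines by which Oliver passes from disks to Euclidean spaces in \cite{Oliver 1996}. The resulting open smooth $G$-manifold $M$ satisfies $M^G\cong F$ and $M\simeq_{/G}Y$.

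The \emph{principal difficulty} is to run Oliver's delicate handle-cancellation analysis --- the source of the integer $n_G$, originally designed to produce a \emph{contractible} ambient manifold --- in the situation where the prescribed ambient homotopy type is the $\mbb{Z}$-acyclic complex $Y$ rather than a point, checking that the discrepancy the obstruction must swallow is precisely $\chi(F)-\chi(Y^G)$ modulo $n_G$, while at the same time realising the cell structure of $Y$ off $Y^G$ by equivariant handles and keeping the fixed point set frozen equal to $F$. Controlling the non-equivariant ambient homotopy type and the equivariant fixed-point data simultaneously is where the real work lies.
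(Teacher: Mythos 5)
Your necessity argument is essentially the right outline, but one step is mis-justified: the assertion that $[TF]=-[\nu]$ ``lies in $\widetilde{KO}(F,G)$ by the definition of that subgroup'' does not follow from the definition. The subgroup $\widetilde{KO}(F,G)$ is defined case by case (torsion or quasidivisible subgroups, images of $r_{\mbb{C}}$, quaternionic classes) according to which of the six classes partitioning $\neg\frak{P}$ the group $G$ belongs to; it is \emph{not} defined as the set of classes $-[\nu]$ for $G$-bundles $\nu$ with $\nu^G=0$. That the normal bundle of $F$ in a $\mbb{Z}$-acyclic smooth $G$-manifold produces a class in this subgroup is a theorem of Oliver, obtained by applying Smith theory to $M^P$ for every $p$-subgroup $P$ to show that the obstruction $\msf{Oliv}(TM|_F)$ vanishes, and then invoking the nontrivial equivalence $\msf{Oliv}(E)=0\Leftrightarrow[TF]\in\widetilde{KO}(F,G)$; in the present paper this is exactly Proposition~\ref{pro: Oliver obstruction} combined with Lemma~\ref{lem: Oliver obstruction}. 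Since the needed statements are citable from \cite{Oliver 1996}, this is a repairable imprecision rather than a fatal error.

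The genuine gap is in the sufficiency direction. You propose to rerun Oliver's equivariant handle attachment and cancellation with target homotopy type $Y$ instead of a point, and you yourself flag this as the principal difficulty --- but the proposal does not resolve it, and it is substantial: $Y$ is only $\mbb{Z}$-acyclic, so it may have nontrivial fundamental group, and constructing a pseudo-equivalence $M\to Y$ while freezing $M^G=F$ requires precisely the fundamental-group and Euler-characteristic analysis carried out by Cappell, Weinberger, and Yan. The paper sidesteps all of this with a short reduction that requires no new surgery: since $\chi(Y^G)\equiv 1\pmod{n_G}$, Theorem~\ref{thm: CWY 2020 F non-empty} provides a finite $G$-CW complex $X_1\simeq_{/G}Y$ with $X_1^G$ a single point; Lemma~\ref{lem: Oliver obstruction} and Theorem~\ref{thm: bundle extension} provide a contractible $G$-CW complex $X_0$ with $X_0^G=F$ carrying a $G$-vector bundle $E_0$ with $(E_0|_F)^G\approx TF$; the wedge $X=X_0\vee X_1$ formed at fixed points satisfies $X^G=F$ and $X\simeq_{/G}Y$, the bundle pulls back along the collapse map $X\to X_0$, and Theorem~\ref{thm: thickening F non-empty} thickens $X$ to the required smooth $G$-manifold $M$ (compact when $X_0$ can be taken finite, i.e.\ when $F$ is compact with $\chi(F)\equiv 1\pmod{n_G}$; finite dimensional and countable, yielding the open case, otherwise). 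Completing your plan would in effect mean reproving the realization theorem of \cite{Cappell-Weinberger-Yan II 2022} inside an equivariant surgery argument; the wedge construction makes that unnecessary.
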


Now, for a finite group \(G\) not of prime power order, and a \(G\)-template \(Y\), we answer the question which stably parallelizable manifolds \(F\)
are the fixed point sets \(M^G\) of compact smooth \(G\)-manifolds \(M \simeq_{/G} Y\).

\begin{theorem}\label{thm: F stably parallelizable}
Let \(G\) be a finite group not of prime power order, and let~\(Y\) be a \(G\)-template. Then a stably parallelizable manifold \(F\) is diffeomorphic to 
the fixed point set \(M^G\) of a compact smooth \(G\)-manifold \(M \simeq_{/G} Y\), if and only if \(F\) is compact and \(\chi(F) \equiv \chi(Y^G) \pmod{n_G}\).
\end{theorem}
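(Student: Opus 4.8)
The plan is to derive Theorem~\ref{thm: F stably parallelizable} from Theorem~\ref{thm: Y acyclic} by a reduction to the \(\mbb{Z}\)-acyclic case, exploiting the fact that stably parallelizable manifolds have trivial (stable) tangent bundle data. First I would dispose of necessity: if \(M\) is a compact smooth \(G\)-manifold with \(M \simeq_{/G} Y\) and \(F = M^G\), then \(F\) is compact (as a closed submanifold of the compact manifold \(M\)), and the congruence \(\chi(F) \equiv \chi(Y^G) \pmod{n_G}\) follows from Smith-theoretic/Euler-characteristic considerations already packaged in the machinery behind Theorems~\ref{thm: Y p-acyclic} and~\ref{thm: Y acyclic}: pseudo-equivalence \(M \simeq_{/G} Y\) forces, localizing at each prime \(p \mid |G|\), agreement of the mod-\(p\) Smith data, and the obstruction to equivariant thickening lives in the Oliver-number quotient, giving exactly \(\chi(M^G) \equiv \chi(Y^G) \pmod{n_G}\). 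None of this uses stable parallelizability; that hypothesis is only needed for sufficiency.

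For sufficiency, suppose \(F\) is compact and stably parallelizable with \(\chi(F) \equiv \chi(Y^G) \pmod{n_G}\). The key point is that \([TF] = 0\) in \(\widetilde{KO}(F)\), hence \emph{a fortiori} \([TF] \in \widetilde{KO}(F,G)\) for any subgroup \(\widetilde{KO}(F,G) \subseteq \widetilde{KO}(F)\) containing \(0\); in particular this holds for the Oliver subgroup defined in Section~\ref{sec: tangent bundle}. So the tangential constraint in Theorem~\ref{thm: Y acyclic} is automatically satisfied. The remaining gap is that Theorem~\ref{thm: Y acyclic} requires a \(\mbb{Z}\)-acyclic \(G\)-template \(Y'\) with \(\chi((Y')^G) \equiv 1 \pmod{n_G}\), whereas here \(Y\) is an arbitrary \(G\)-template. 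The plan is therefore to replace \(Y\) by a \(\mbb{Z}\)-acyclic model: I would attach free \(G\)-cells (in dimensions \(\geq 2\)) to \(Y\) to kill \(\widetilde{H}_*(Y;\mbb{Z})\) away from the fixed set, and attach cells to \(Y^G\) to adjust \(\chi(Y^G)\), producing a \(\mbb{Z}\)-acyclic \(G\)-template \(Y'\) with a \(G\)-map \(Y \to Y'\) and with \(\chi((Y')^G) \equiv \chi(Y^G) \pmod{n_G}\); since \(\chi(F) \equiv \chi(Y^G)\), the target Euler characteristics match modulo \(n_G\). One then applies Theorem~\ref{thm: Y acyclic} to \(Y'\) and \(F\) to get a compact smooth \(G\)-manifold \(M\) with \(M \simeq_{/G} Y'\) and \(M^G \cong F\), and finally composes the pseudo-equivalence \(M \to Y'\) with a homotopy equivalence \(Y' \to Y\) — the latter engineered so that \(Y \simeq Y'\) non-equivariantly while the fixed sets are matched — to obtain \(M \simeq_{/G} Y\).

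I expect the main obstacle to be precisely this last step: ensuring that the \(\mbb{Z}\)-acyclic replacement \(Y'\) is simultaneously (a) non-equivariantly homotopy equivalent to something mapping to \(Y\) so that a pseudo-equivalence to \(Y'\) yields a pseudo-equivalence to \(Y\), and (b) has the right fixed-point Euler characteristic modulo \(n_G\). The honest difficulty is that an arbitrary \(G\)-template \(Y\) need not be \(\mbb{Z}\)-acyclic nor have any relation to a contractible complex, so "\(M \simeq_{/G} Y'\) and \(Y' \simeq Y\)" does not immediately give "\(M \simeq_{/G} Y\)" unless the homotopy equivalence is realized \(G\)-equivariantly in a controlled way on fixed sets. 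I would handle this by working instead directly: rather than passing through a single \(Y'\), construct \(M\) by equivariant surgery/handle attachment on a \(G\)-manifold modeled on \(Y\) itself, using stable parallelizability of \(F\) to kill all tangential normal obstructions (as in the proof of Theorem~\ref{thm: Y acyclic}, which already contains the necessary equivariant bundle-theoretic input), and using the Oliver-number congruence \(\chi(F) \equiv \chi(Y^G)\pmod{n_G}\) to carry out the fixed-point modification via Oliver's realization of Euler-characteristic differences by contractible \(G\)-CW complexes. The stably-parallelizable hypothesis is exactly what makes the bundle data vacuous, so the construction reduces to the purely homotopy-theoretic and Euler-characteristic bookkeeping, which is controlled by \(n_G\).
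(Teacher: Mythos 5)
Your proposal correctly isolates the one place where stable parallelizability enters --- it makes the tangential obstruction vacuous, since $[TF]=0$ in $\widetilde{KO}(F)$ and hence $[TF]\in\widetilde{KO}(F,G)$ for every group in the paper's classification --- but the route you take to exploit this has a genuine gap. Your plan is to replace the arbitrary $G$-template $Y$ by a $\mbb{Z}$-acyclic $Y'$ and apply Theorem~\ref{thm: Y acyclic}. This cannot work as stated: a $\mbb{Z}$-acyclic complex $Y'$ is not homotopy equivalent to a general $Y$ (e.g.\ $Y=\mbb{R}\mathrm{P}^{2n}$ or any template with nontrivial homology), so a pseudo-equivalence $M\to Y'$ can never be upgraded to a pseudo-equivalence $M\to Y$; attaching free $G$-cells to kill $\widetilde H_*(Y;\mbb{Z})$ destroys exactly the homotopy type you need to hit. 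You acknowledge this obstacle, but your fallback --- ``equivariant surgery/handle attachment on a $G$-manifold modeled on $Y$ itself,'' with the Euler-characteristic bookkeeping done ``via Oliver's realization of Euler-characteristic differences by contractible $G$-CW complexes'' --- is not a proof; it amounts to asking for a realization theorem for fixed point sets of finite $G$-CW complexes pseudo-equivalent to an \emph{arbitrary} template $Y$, which is precisely the nontrivial external input you are missing. Note also that the wedge trick used in the paper's proof of Theorem~\ref{thm: sufficiency in thm Y acyclic} is unavailable here, because it requires a finite \emph{contractible} $G$-CW complex with fixed set $F$, hence $\chi(F)\equiv 1\pmod{n_G}$, which is not assumed.

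The missing ingredient is Theorem~\ref{thm: CWY 2020 F non-empty} (Cappell--Weinberger--Yan): for any $G$-template $Y$, a finite CW complex $F$ is the fixed point set of a finite $G$-CW complex $X\simeq_{/G}Y$ if and only if $\chi(F)\equiv\chi(Y^G)\pmod{n_G}$. This single statement supplies both directions. Necessity is immediate from it (your Smith-theoretic sketch gestures at the right conclusion but does not prove it; the congruence for general templates is a theorem, not a formal consequence of the machinery behind Theorems~\ref{thm: Y p-acyclic} and~\ref{thm: Y acyclic}). For sufficiency, one takes the $X\simeq_{/G}Y$ with $X^G=F$ that it provides, sets $E=\mbb{R}^{m+n}_{\times X}$ with $TF\oplus\mbb{R}^n_{\times F}\approx\mbb{R}^{m+n}_{\times F}$ (this is where stable parallelizability is used, exactly as you intuited), and applies the equivariant thickening Theorem~\ref{thm: thickening F non-empty} to produce the compact smooth $M\simeq_{/G}Y$ with $M^G=F$. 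With that citation inserted, your argument collapses to the paper's two-line proof; without it, the reduction does not close.
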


Next, we show that for a \(G\)-template \(Y\), the Euler characteristic \(\chi(Y^G)\) is the only obstruction for the existence of a compact smooth 
fixed point free \(G\)-manifold \(M \simeq_{/G} Y\). In particular,  if the \(G\)-template \(Y\)  is trivial,
\(\chi(Y)\) is the only obstruction for the existence of such a \(G\)-manifold \(M\).

\begin{theorem}\label{thm: F empty}
Let \(G\) be a finite group not of prime power order, and let \(Y\) be a \(G\)-template.  
Then there exists a compact smooth fixed point free \(G\)-manifold \(M \simeq_{/G} Y\),  if and only if \(\chi(Y^G) \equiv 0 \pmod{n_G}\).
\end{theorem}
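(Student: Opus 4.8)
The plan is to deduce Theorem~\ref{thm: F empty} from Theorem~\ref{thm: F stably parallelizable} by choosing a suitable stably parallelizable candidate for the fixed point set. Suppose first that there exists a compact smooth fixed point free \(G\)-manifold \(M \simeq_{/G} Y\); then \(M^G = \varnothing\), so \(\chi(M^G) = 0\), and since \(M \simeq_{/G} Y\) one has \(\chi(Y^G) = \chi(M^G)\) once we know that passing to fixed point sets through a pseudo-equivalence preserves the Euler characteristic of the \(G\)-fixed part. Actually the cleaner way is: the fixed point free case of Theorem~\ref{thm: F stably parallelizable} should be read with \(F = \varnothing\), which is vacuously stably parallelizable and compact, and then the congruence \(\chi(F) \equiv \chi(Y^G) \pmod{n_G}\) reads \(0 \equiv \chi(Y^G) \pmod{n_G}\). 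So the statement is literally the \(F = \varnothing\) instance of Theorem~\ref{thm: F stably parallelizable}, provided that theorem is proved in a form that admits the empty manifold. I would therefore first check whether the proof of Theorem~\ref{thm: F stably parallelizable} goes through with \(F = \varnothing\); if it does, there is essentially nothing more to do, and the only point requiring comment is that \(M^G = \varnothing\) is equivalent to the \(G\)-action on \(M\) being fixed point free, which is the definition.

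If instead one wants a self-contained argument not relying on the \(F=\varnothing\) edge case, I would argue as follows. For the ``only if'' direction, given a compact smooth fixed point free \(G\)-manifold \(M \simeq_{/G} Y\), one uses the Oliver-type obstruction: the assignment \(M \mapsto \chi(M^G) - \chi(Y^G)\) lands in \(n_G \cdot \mbb{Z}\). This is the analogue, for \(G\)-manifolds pseudo-equivalent to \(Y\) rather than contractible \(G\)-CW complexes, of the defining property of the Oliver number, and it is surely established in the earlier sections of the paper (it underlies the ``only if'' directions of Theorems~\ref{thm: Y acyclic} and~\ref{thm: F stably parallelizable} as well). Since \(M^G = \varnothing\) forces \(\chi(M^G) = 0\), we get \(\chi(Y^G) \equiv 0 \pmod{n_G}\). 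For the ``if'' direction, assume \(\chi(Y^G) \equiv 0 \pmod{n_G}\). Apply Theorem~\ref{thm: F stably parallelizable} to the stably parallelizable manifold \(F = \varnothing\): it is compact, and \(\chi(\varnothing) = 0 \equiv \chi(Y^G) \pmod{n_G}\) by hypothesis, so there exists a compact smooth \(G\)-manifold \(M \simeq_{/G} Y\) with \(M^G = \varnothing\), i.e.\ a compact smooth fixed point free \(G\)-manifold pseudo-equivalent to \(Y\).

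The only genuine subtlety — and the step I would expect to need the most care — is whether \(F = \varnothing\) is an admissible input to the construction behind Theorem~\ref{thm: F stably parallelizable}. Constructions of this sort typically build \(M\) by starting from a \(G\)-vector bundle over \(F\) (a tubular neighborhood of the fixed point set) and attaching free \(G\)-cells/handles to kill homotopy and achieve the pseudo-equivalence to \(Y\); when \(F = \varnothing\) the bundle is empty and one must instead seed the construction with a free \(G\)-CW complex, and then the Euler-characteristic bookkeeping — which is where \(n_G\) enters — has to be checked to still yield exactly the condition \(\chi(Y^G) \equiv 0 \pmod{n_G}\). I would verify that the gluing/handle-addition lemmas used for Theorem~\ref{thm: F stably parallelizable} have been stated (or can be restated with a one-line remark) so as to cover the empty fixed set, after which the corollary is immediate. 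I would also remark, for completeness, that when \(Y\) is trivial one has \(Y^G = Y\) so the criterion becomes \(\chi(Y) \equiv 0 \pmod{n_G}\), as asserted in the paragraph preceding the theorem.
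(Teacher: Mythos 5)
Your proposal is correct and follows essentially the same route as the paper: both directions come from the Cappell--Weinberger--Yan realization theorem specialized to \(X^G=\varnothing\) (Theorem~\ref{thm: CWY 2020 F empty}), and for sufficiency one thickens the resulting finite \(G\)-CW complex using the empty-fixed-point-set version of the thickening theorem (Theorem~\ref{thm: thickening F empty}) applied to an arbitrary real \(G\)-vector bundle over \(X\). The edge case you rightly flag is resolved in the paper not by extending Theorem~\ref{thm: F stably parallelizable} to \(F=\varnothing\) but by invoking these two separately stated \(F=\varnothing\) results, so your plan goes through once you cite them in place of the non-empty versions.
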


In the theory of transformation groups, an interesting task is to answer the question which finite groups \(G\) have smooth fixed point free 
actions on specific compact smooth manifolds which have the fixed point property (i.e., manifolds whose self-maps all have fixed points).

In the case of \(G\)-actions on disks, Oliver \cite{Oliver 1975} has answered the question as follows. 
A finite group \(G\) has a smooth fixed point free action on a disk if and only if \(G\) does not contain a series of subgroups of the form 
\(P \trianglelefteq H \trianglelefteq G\), where \(P\) and \(G/H\) are of prime power orders, and \(H/P\) is cyclic. 
A finite group \(G\) does not contain such a series of subgroups if and only if \(G\) is not of prime power order and the Oliver number 
\(n_G = 1\). In the literature, such a group \(G\)  is called an \emph{Oliver group}. This notion was used for the first time 
in the article of Laitinen and Morimoto \cite{Laitinen-Morimoto 1998}.

In general, if  a finite group \(G\) contains a subgroup which is an Oliver group, then \(G\) itself is an Oliver group.
Also, if a quotient of \(G\) is an Oliver group, so is \(G\). For \(n \geq 2\), let \(C_n\) denote the cyclic group of order \(n\), 
and let \(A_n\) and \(S_n\) denote the alternating and symmetric group on \(n\) letters, respectively.
The~list of finite Oliver groups includes finite groups such as

\begin{itemize}
\smallskip
\item[\rm 1)] nilpotent (in particular, abelian) groups with at least three noncyclic Sylow subgroups,  
                     (e.g., \(C_{pqr} \times C_{pqr}\) for three distinct primes \(p\), \(q\), and \(r\)),
\item[\rm 2)] the groups \(C_3 \times S_4\), \(S_3 \times A_4\), and \((C_3 \times A_4) \rtimes C_2\) of order \(72\),
                      which are examples of non-nilpotent solvable groups, 
\item[\rm 3)] all nonsolvable groups (e.g., most of \(\msf{GL}_n(\mathbb{F}_q)\)'s) and thus,
                     all perfect groups (e.g., most of \(\msf{SL}_n(\mathbb{F}_q)\)'s) and all nonabelian simple groups 
                    (e.g., most of \(\msf{PSL}_n(\mathbb{F}_q)\)'s), as well as, \(A_n\) and \(S_n\) for \(n \geq 5\), with 
                    \(A_5\) of order \(60\), being the smallest finite Oliver group. 
\end{itemize}
\smallskip

It is well-known that for any integer \(n \geq 1\), the real projective space \(\mbb{R}{\rm P}^{2n}\) has the fixed point property,
while each \(\mbb{R}{\rm P}^{2n-1}\)\! admits self-maps without fixed points. As \(\chi(\mbb{R}{\rm P}^{2n}) = 1\), 
it follows from Theorem~\ref{thm: F empty} that for any \(n \geq 1\), a finite group \(G\) has a smooth fixed point
free action on a compact smooth manifold \(M \simeq_{/G} \mbb{R}{\rm P}^{2n}\) if and only if \(n_G = 1\), i.e., 
\(G\) is an Oliver group. We show that in fact, each finite Oliver group \(G\) admits a smooth fixed point free action also on 
\(\mbb{R}{\rm P}^{2n}\) itself for some integer \(n \geq 1\).

\begin{theorem}\label{thm: F empty in RP2n}
Each finite Oliver group \(G\) has a smooth fixed point free action on some even dimensional real projective space.
\end{theorem}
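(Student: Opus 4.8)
Applying Theorem~\ref{thm: F empty} to the trivial \(G\)-template \(Y=\mbb{R}{\rm P}^{2n}\) already gives a compact smooth fixed point free \(G\)-manifold \(M\simeq_{/G}\mbb{R}{\rm P}^{2n}\), since for an Oliver group \(n_G=1\) and so the hypothesis \(\chi(Y^G)\equiv0\pmod{n_G}\) holds automatically; but this controls \(M\) only up to homotopy, whereas we want \(\mbb{R}{\rm P}^{2n}\) on the nose. So the plan is to build the action directly, by modifying a linear model near its fixed point. Choose a real \(G\)-representation \(W\) with \(\dim W\) even and as large as the later steps require, with no one-dimensional subrepresentation (so in particular \(W^G=0\)), with \(W^P\ne0\) for every subgroup \(P\le G\) of prime power order, and with faithful \(G\)-action; a routine argument shows that such a \(W\) exists precisely because \(G\) is an Oliver group. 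Put \(2n=\dim W\) and let \(G\) act linearly on \(\mbb{R}{\rm P}^{2n}\) through the \((2n+1)\)-dimensional representation \(\mbb{R}\oplus W\). Then \(\mbb{R}\) is the unique \(G\)-invariant line in \(\mbb{R}\oplus W\), so this linear action has exactly one fixed point \(x_0\), and by the equivariant tubular neighbourhood theorem a closed invariant neighbourhood \(\overline U\) of \(x_0\) is \(G\)-diffeomorphic to the linear disk \(D(W)\), with \(\partial\overline U\) \(G\)-diffeomorphic to \(S(W)\).

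Now excise \(\overline U\) and glue in a fixed point free disk. The complement \(C:=\mbb{R}{\rm P}^{2n}\setminus\mathrm{int}\,\overline U\) is a compact smooth \(G\)-manifold with \(C^G=\varnothing\) and \(\partial C\) \(G\)-diffeomorphic to \(S(W)\). Using the constructions behind Oliver's disk theorem~\cite{Oliver 1975} and those of \cite{Pawalowski-Pulikowski 2019} underlying Theorem~\ref{thm: F empty} — here \(n_G=1\) is exactly what kills the obstruction to an empty fixed set, and \(W^P\ne0\) for all prime power \(P\) keeps the Smith-theoretic data at the subgroups consistent — one constructs a smooth fixed point free \(G\)-action on the standard disk \(D^{2n}\) whose boundary action is \(G\)-diffeomorphic, via the standard identification \(\partial D^{2n}=S^{2n-1}\), to the linear sphere \(S(W)\). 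Gluing this disk in place of \(\overline U\) produces a closed smooth \(G\)-manifold
\[
N:=C\cup_{S(W)}D^{2n},\qquad N^G=C^G\cup(D^{2n})^G=\varnothing .
\]
Forgetting the action, \(N\) is obtained from \(\mbb{R}{\rm P}^{2n}\) by replacing an open \(2n\)-disk by another one; provided the boundary identification above is chosen standard this gives \(N\cong\mbb{R}{\rm P}^{2n}\) outright, and in any case any residual exotic twist can be removed by an equivariant connected sum of the glued disk along a free orbit (non-empty, as \(W\) is faithful), using also the relation \(\mbb{R}{\rm P}^{2n}\#\Sigma\cong\mbb{R}{\rm P}^{2n}\#(-\Sigma)\) that comes from non-orientability and, if needed, a larger \(W\). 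Hence \(N\cong\mbb{R}{\rm P}^{2n}\), so \(G\) acts smoothly and fixed point freely on \(\mbb{R}{\rm P}^{2n}\).

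I expect the heart of the matter to be the middle step: constructing a fixed point free \(G\)-action on a disk with the \emph{prescribed} linear boundary \(S(W)\). One must at once realize \(\varnothing\) as the \(G\)-fixed set of a disk action (an Oliver-type problem, solvable exactly because \(n_G=1\)) and match, on \(\partial D^{2n}\) and on each fixed set \((D^{2n})^P\) with \(P\) of prime power order, the rigid data prescribed by the representation \(W\); this is where the machinery of \cite{Oliver 1975} and \cite{Pawalowski-Pulikowski 2019} does the real work. By comparison, the representation-theoretic existence of a suitable \(W\) and the concluding identification of smooth structures are routine.
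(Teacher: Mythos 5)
Your overall strategy---kill the unique fixed point of a linear action on $\mbb{R}{\rm P}^{2n}$ by excising an invariant disk around it and regluing a fixed point free piece---is the right shape, and it is in essence what the paper does. But there is a genuine gap at the step you yourself flag as the heart of the matter: the existence of a smooth fixed point free $G$-action on $D^{2n}$ whose boundary is the \emph{prescribed} linear sphere $S(W)$. Nothing you cite delivers this. Oliver's disk theorem (Theorem~\ref{thm: fixed point free}) produces a fixed point free action on \emph{some} disk with no control whatsoever over the boundary action, and the constructions of \cite{Pawalowski-Pulikowski 2019} concern $p$-groups and acyclic templates; neither lets you match a linear boundary. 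A fixed point free disk with linear boundary $S(W)$ is, after capping off with the linear disk $D(W)$, exactly a one-fixed-point action on a sphere with tangent representation $W$ at the fixed point---so your middle step is equivalent to a one-fixed-point theorem with \emph{arbitrarily prescribed} tangent representation $W$, which is strictly stronger than what is known. The hypotheses you impose on $W$ ($W^G=0$, $W^P\neq 0$, faithfulness, $n_G=1$) do not suffice, and the claim that a ``routine argument'' plus the cited machinery produces the disk is where the proof breaks.

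The paper resolves this by running your argument in the opposite order: it first invokes the Laitinen--Morimoto theorem (Theorem~\ref{thm: one fixed point}) to get a smooth one-fixed-point action on a standard sphere $S^{2n}$, and only then sets $V=T_{x_0}(S^{2n})$ and takes the projective space to be $P(V\oplus\mbb{R})$ for \emph{that} $V$. The extra property supplied by Laitinen--Morimoto---no large isotropy subgroups in $V$ outside the origin---is what guarantees $[0]$ is the only fixed point of $P(V\oplus\mbb{R})$ (an index-$2$ isotropy group at some $x\in SV$ would be large). The two one-fixed-point actions then have matching tangent representations at their fixed points, so the equivariant connected sum $S^{2n}\,\#\,P(V\oplus\mbb{R})\cong\mbb{R}{\rm P}^{2n}$ deletes both fixed points; since one summand is the standard sphere, the smooth-structure worries in your last paragraph evaporate. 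If you replace your a priori choice of $W$ by the representation $V$ coming from Theorem~\ref{thm: one fixed point} and phrase the surgery as this connected sum, your argument becomes the paper's proof.
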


In the books of Bredon \cite{Bredon 1972}, tom Dieck \cite{tom Dieck 1987}, and Kawakubo \cite{Kawakubo 1991},
one can find the background material on transformation groups. We refer the reader to \cite{Pawalowski 2002} 
for a survey about the fixed point sets of group actions on manifolds.

\section{Equivariant thickening}
\label{sec: thickening}

In this paper, without mentioning it explicitly, each smooth manifold \(M\) is \emph{second countable} (i.e., \(M\) has
 a countable base of topology).

For a finite group \(G\), each \(G\)-CW complex \(X\) that we consider is \emph{countable}, i.e., \(X\) is obtained from the disjoint union of countably 
many \(0\)-dimensional \(G\)-cells \(G/H\), by attaching countably many \(n\)-dimensional \(G\)-cells \(G/H \times D^n\) for various subgroups 
\(H\) of \(G\) and variuos integers \(n \geq 1\). Recall that \(X\) is said to be \emph{finite} (resp., \emph{finite dimensional}) 
if \(X\) has finitely many \(G\)-cells (resp., there exists an integer \(m\) such that \(m \geq n\) for all \(G\)-cells \(G/H \times D^n\) in \(X\)).

By the results of \cite{Illman 1983} or \cite{Matumoto-Shiota 1986}, any smooth \(G\)-manifold \(M\) has the structure of a finite dimensional, 
countable \(G\)-CW complex. Moreover, the manifold \(M\) is compact if and only if \(M\) has the structure of a finite \(G\)-CW complex.

We denote by the same symbol a real \(G\)-vector bundle and the total space of the bundle. For a real \(G\)-vector bundle \(E\) over \(X\) and 
a G-map \(f \colon M \to X\), \(f^{\ast} E\) denotes the real \(G\)-vector bundle over \(M\) induced by \(f\). Moreover, 
\(\mbb{R}^n_{\times X}\) denotes the product \(G\)-vector bundle \(\mbb{R}^n \times X\) over \(X\) with the trivial \(G\)-action on \(\mbb{R}^n\).
More generally, for a given \(\mathbb{R}G\)-module \(V\) (i.e., a real vector space \(V\) with a linear action of \(G\)), 
we denote by \(V_{\times X}\) the product \(G\)-vector bundle \(V \times X\) over \(X\).
If \(E\) and \(E'\) are two real vector bundles (resp., real \(G\)-vector bundles), we write \(E \approx E'\) (resp., \(E \approx_G E'\)) 
when \(E\) and \(E'\) are isomorphic as real vector bundles (resp., real \(G\)-vector bundles).

Now, we state two equivariant thickening theorems which allow to convert a \(G\)-CW complexes \(X\) into a smooth \(G\)-manifold \(M\)
of the \(G\)-homotopy type of \(X\), such that \(M^G = X^G\). Both theorems go back to 
\cite[Sections 2 and 3]{Pawalowski 1989}, where more detailed conclusions are given for any compact Lie group \(G\).

\begin{theorem}\label{thm: thickening F non-empty}
Let \(G\) be a finite group, let \(F\) be a non-empty smooth manifold, let \(X\) be a finite dimensional countable \(G\)-CW complex 
such that \(X^G = F\), and let \(E\) be a real \(G\)-vector bundle over \(X\) such that 
\[
(E|_F)^G \approx T F \oplus  \mbb{R}^n_{\times F} \quad \text{with} \quad n \geq 0.
\]
Then there exists a smooth \(G\)-manifold \(M\) of the \(G\)-homotopy type of \(X\), such that \(M^G\) is diffeomorphic to \(F\).
Moreover, there exists a strong \(G\)-deformation retraction \(f \colon M \to X\) such that for some \(\mathbb{R}G\)-module \(V\) with \(V^G = 0\),
\[
f ^{\ast}E \oplus V_{\times M} \approx_G T M \oplus \mbb{R}^n_{\times M}.
\]
\end{theorem}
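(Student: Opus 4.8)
The plan is to construct \(M\) by \emph{equivariant thickening}. One cannot simply embed \(X\) in a \(G\)-representation and take an invariant regular neighbourhood, since that would replace \(F=X^{G}\) by a thickening of it rather than by \(F\) itself; instead I would build \(M\) starting from a \(G\)-vector bundle over \(F\) (so that the correct smooth fixed set is present from the outset) and then attach one equivariant handle for each cell of \(X\) lying outside \(F\), using \(E\) throughout as bookkeeping for the tangent bundle. Because \(G\) is finite, every orbit \(G/H\) is a finite \(G\)-set and \((G\times_{H}Z)^{G}=\varnothing\) whenever \(H\subsetneq G\), so the attached handles contribute nothing to the \(G\)-fixed set and the delicate points are the \(G\)-homotopy type and the bundle relation, not the identity \(M^{G}=F\). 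It is convenient to fix once and for all one \(\mathbb{R}G\)-module \(V\) with \(V^{G}=0\) that is ``large enough''; I would take \(V=\overline{\mathbb{R}[G]}^{\oplus k}\), where \(\overline{\mathbb{R}[G]}=\mathbb{R}[G]\ominus\mathbb{R}\) is the reduced regular representation, choosing \(k\) — in terms only of \(\dim X\), \(\operatorname{rk}E\), and \(|G|\) — so that \(\dim V^{H}=k\bigl([G:H]-1\bigr)\) is large for every proper subgroup \(H\) of \(G\).

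For the base step I would take a \(G\)-invariant metric on \(E|_{F}\), split \(E|_{F}\approx_{G}(E|_{F})^{G}\oplus\bigl(E|_{F}\ominus(E|_{F})^{G}\bigr)\), invoke the hypothesis \((E|_{F})^{G}\approx TF\oplus\mathbb{R}^{n}_{\times F}\), and set \(\nu_{0}:=\bigl(E|_{F}\ominus(E|_{F})^{G}\bigr)\oplus V_{\times F}\), a \(G\)-vector bundle over \(F\) with \(\nu_{0}^{G}=0\) and with \(\operatorname{rk}\nu_{0}^{H}\) large for each proper \(H\subsetneq G\). Let \(M_{0}\) be the total space of \(\nu_{0}\), or its closed unit disk bundle when \(F\) is compact. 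Then \(M_{0}^{G}=F\), the projection exhibits \(F\) as a strong \(G\)-deformation retract of \(M_{0}\), and, writing \(f_{0}\colon M_{0}\to F\hookrightarrow X\) for the composite, \(TM_{0}\oplus\mathbb{R}^{n}_{\times M_{0}}\approx_{G}(TF\oplus\nu_{0}\oplus\mathbb{R}^{n})_{\times M_{0}}\approx_{G}f_{0}^{\ast}E\oplus V_{\times M_{0}}\). This is the base case of an induction over the cells of \(X\) outside \(F\).

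In the inductive step, suppose \(M_{j-1}\) has been built, \(G\)-deformation retracting onto a spine \(G\)-homotopy equivalent to \(X_{j-1}\cup F\) (with \(X_{j-1}\) the \((j-1)\)-skeleton of \(X\)), with \(M_{j-1}^{G}=F\) and \(TM_{j-1}\oplus\mathbb{R}^{n}_{\times M_{j-1}}\approx_{G}f_{j-1}^{\ast}E\oplus V_{\times M_{j-1}}\). For each \(j\)-cell \(G/H\times D^{j}\) of \(X\), necessarily with \(H\subsetneq G\) and attaching map \(\phi\), transport \(\phi\) through the retraction to a \(G\)-map \(G/H\times S^{j-1}\to M_{j-1}\), homotope it into \(\partial M_{j-1}\), and then deform it to a \(G\)-embedding by equivariant general position and the equivariant embedding theorem, carried out in \((\partial M_{j-1})^{H}\), whose dimension \(\dim(M_{j-1})^{H}-1\) is large because \(\operatorname{rk}\nu_{0}^{H}\) is. Attach the equivariant handle \(G\times_{H}\bigl(D^{j}\times D(\rho_{H})\bigr)\) along a \(G\)-tubular neighbourhood of this embedding, where the \(H\)-representation \(\rho_{H}\) is pinned down by the requirement that \(TM_{j}\oplus\mathbb{R}^{n}_{\times M_{j}}\approx_{G}f_{j}^{\ast}E\oplus V_{\times M_{j}}\) persist over the new handle; for \(k\) large \(\rho_{H}\) exists as an honest \(H\)-representation and is a subrepresentation of \((E_{\phi(\ast)}\oplus V)|_{H}\), so \(V\) never has to be enlarged. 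Since the core attaching map of the handle is \(G\)-homotopic to \(\phi\), the resulting \(M_{j}\) again retracts onto a spine, now \(G\)-homotopy equivalent to \(X_{j}\cup F\). Smoothing corners and passing to the limit over all cells — finitely many handles when \(X\) is finite, a countable increasing union \(M=\bigcup_{i}M_{i}\) otherwise — yields \(M\), the strong \(G\)-deformation retraction \(f\colon M\to X\), and the isomorphism \(f^{\ast}E\oplus V_{\times M}\approx_{G}TM\oplus\mathbb{R}^{n}_{\times M}\).

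The main obstacle I anticipate is making the \emph{normal representation} of each attaching embedding equal to the representation \(\rho_{H}\) dictated by \(E\) and \(V\): the normal \(H\)-representation of \(G/H\times S^{j-1}\) in \(\partial M_{j-1}\) is determined by the embedding, yet it must be arranged to coincide with \(\rho_{H}\), uniformly over all cells and without ever enlarging \(V\) — enlarging \(V\) cell by cell would be fatal in the non-finite case. Overcoming this requires an a priori bound, in terms of \(\dim X\), \(\operatorname{rk}E\) and \(|G|\), on how much equivariant general position and how much control over the normal data one will ever need, after which all stabilisation is absorbed into the single module \(V\) and the base piece \(M_{0}\); this is precisely the content of the equivariant thickening technique of \cite[Sections~2 and~3]{Pawalowski 1989}. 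A minor additional point, again handled by equivariant general position, is to keep the successive attaching embeddings disjoint from \(F\) and compatible with the part of \(\partial M_{j-1}\) already used, so that the induction closes.
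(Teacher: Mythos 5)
Your proposal is correct and follows essentially the same route as the paper, which does not reprove this theorem but derives it from the equivariant thickening technique of Pawa{\l}owski \cite{Pawalowski 1989}, Sections 2 and 3: start from the disk bundle of the non-fixed part of \(E|_F\) stabilized by a single \(\mathbb{R}G\)-module \(V\) with \(V^G=0\), then attach equivariant handles indexed by the \(G\)-cells of \(X\) outside \(F\), with the normal representations dictated by \(E\oplus V_{\times X}\). Your identification of the matching of normal data (and the need for an a priori, uniform stabilization rather than enlarging \(V\) cell by cell) as the key technical point is exactly where the work in the cited reference lies.
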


\begin{theorem}\label{thm: thickening F empty}
Let \(G\) be a finite group, let \(X\) be a finite dimensional countable \(G\)-CW complex such that \(X^G = \varnothing\), Then, 
for any real \(G\)-vector bundle \(E\) over \(X\), there exists a smooth \(G\)-manifold \(M\) of the \(G\)-homotoy type of \(X\),
such that  \(M^G = \varnothing\), and there exists a strong \(G\)-deformation retraction \(f \colon M \to X\) such that 
for some \(\mathbb{R}G\)-module \(V\) with \(V^G = 0\), 
\[
f^{\ast}E \oplus V_{\times M} \approx_G TM.
\]
\end{theorem}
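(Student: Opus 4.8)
The plan is to construct \(M\) by equivariant thickening, converting \(X\) into a smooth \(G\)-manifold one \(G\)-cell at a time by attaching smooth \(G\)-handles that carry both the \(G\)-homotopy type of \(X\) and the prescribed stable tangent data read off from \(E\). The decisive structural feature is that \(X^G = \varnothing\), so every \(G\)-cell of \(X\) has the form \(G/H \times D^k\) with \(H \subsetneq G\); none of the building blocks meets the \(G\)-fixed locus. This is exactly what will keep \(M^G = \varnothing\) at every stage and will allow all stabilizations to be carried out by \(\mathbb{R}G\)-modules with vanishing \(G\)-fixed part, as required for the final isomorphism with \(V^G = 0\).

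First I would fix the ambient room. By the equivariant embedding theorem of Mostow and Palais, and because no orbit of type \(G/G\) occurs, I choose once and for all a single \(\mathbb{R}G\)-module \(V\) with \(V^G = 0\) that is large enough to contain equivariantly every orbit type \(G/H\) present in \(X\), to provide general-position room, and to trivialize the induced normal bundles that will appear below. The construction then proceeds by induction over a filtration \(X_0 \subset X_1 \subset \cdots\) exhausting the skeleta of \(X\). The induction begins with the \(0\)-skeleton, a disjoint union of orbits \(G/H\) thickened to linear \(G\)-disk-bundles \(G \times_H D(W_0)\) over them, and maintains a compact smooth \(G\)-manifold \(M_j\) (with corners), a strong \(G\)-deformation retraction \(f_j \colon M_j \to X_j\), the property \(M_j^G = \varnothing\), and an on-the-nose \(G\)-isomorphism \(T M_j \approx_G f_j^{\ast} E \oplus V_{\times M_j}\).

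For the inductive step a new \(G\)-cell is attached to \(X_j\) along a \(G\)-map \(\phi \colon G/H \times S^{k-1} \to X_j\). Composing with \(f_j\) and pushing into \(\partial M_j\), I would use equivariant general position in the stabilized boundary to realize a \(G\)-embedding of a tubular thickening \(G \times_H (S^{k-1} \times D(W))\) of the attaching sphere, where the \(H\)-representation \(W\) is dictated by the fiber of \(E\) over the cell. Attaching the equivariant handle \(G \times_H (D^k \times D(W))\) along this embedding and smoothing corners yields \(M_{j+1}\), and the core of the handle extends \(f_j\) to a strong \(G\)-deformation retraction \(f_{j+1} \colon M_{j+1} \to X_{j+1}\). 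Since \(H \neq G\), the handle has empty \(G\)-fixed set, so \(M_{j+1}^G = \varnothing\); and because its core direction models the \(E\)-fiber over the new cell while its transverse directions are fixed-point-free and, after the chosen stabilization by \(V\), trivial as a \(G\)-bundle, the tangent-bundle identity \(T M_{j+1} \approx_G f_{j+1}^{\ast} E \oplus V_{\times M_{j+1}}\) is preserved.

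Finally I would pass to the limit: if \(X\) is infinite, \(M = \bigcup_j M_j\) is an open smooth \(G\)-manifold with \(M^G = \varnothing\), the retractions \(f_j\) assemble to a strong \(G\)-deformation retraction \(f \colon M \to X\), and the tangent identities assemble to \(f^{\ast}E \oplus V_{\times M} \approx_G TM\) with \(V^G = 0\); the finite case is the special instance where the filtration terminates and \(M\) is compact. I expect the main obstacle to be the inductive handle step: realizing each attaching map as a \(G\)-embedding with exactly the normal \(H\)-representation dictated by \(E\), and verifying that a single fixed-point-free module \(V\) can be chosen at the outset to supply general-position room, to trivialize every induced handle normal bundle, and to keep the tangent identity true on the nose at each attachment, all without ever introducing a \(G\)-fixed direction.
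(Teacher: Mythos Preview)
The paper does not give its own proof of this theorem: it is stated in Section~1 together with Theorem~1.1 and both are attributed, without further argument, to \cite[Sections~2 and~3]{Pawalowski 1989}. So there is nothing in the paper to compare your attempt against.

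That said, your outline is precisely the equivariant thickening construction from the cited reference, and for finite \(G\) the logic is sound: one builds \(M\) inductively over the \(G\)-cells of \(X\), attaching linear \(G\)-handles \(G \times_H (D^k \times D(W))\) along \(G\)-embeddings of the attaching spheres in \(\partial M_j\), with the transverse \(\mathbb{R}H\)-module \(W\) chosen so that the stable tangent identity is maintained; since \(X^G = \varnothing\) forces \(H \subsetneq G\) for every cell, no \(G\)-fixed points are created and every stabilization can be absorbed into an \(\mathbb{R}G\)-module \(V\) with \(V^G = 0\). The technical points you flag at the end are exactly the ones that need work in the reference: (i) because \(G\) is finite and \(X\) is finite dimensional, only finitely many orbit types and cell dimensions occur, so a single finite-dimensional \(V\) (containing enough copies of \(\mathbb{R}[G/H]\) for each proper \(H\) that appears) gives both the general-position room for embedding attaching spheres and the required trivial \(H\)-summands in \(\mathrm{Res}^G_H V\); and (ii) since \(E\) is already defined over the new cell, its restriction there is a product \(G \times_H W_E\), and the tangent identity on \(M_j\) together with \(W_E\) determines the transverse representation \(W\) of the handle, after which the identity extends over \(M_{j+1}\) by inspection. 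Your sketch would be accepted as a correct summary of the argument, with the details residing in the cited paper.
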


\begin{remark}
\emph{In  Theorems~\ref{thm: thickening F non-empty} and \ref{thm: thickening F empty},  if \(X\) is finite (resp.,  infinite),
\(M\) can be chosen to be compact (resp., open -- assuming \(\partial F = \varnothing\)). If in addition \(X\) is contractible, 
\(M\) can be chosen to be a disk (resp., Euclidean space).}
\end{remark}

\section{The tangent bundle restrictions}
\label{sec: tangent bundle}

A real vector bundle \(E\) over a space \(X\) is called \emph{stably complex} if for some integer \(n \geq 0\), the Whitney sum 
\(E \oplus \mbb{R}^n_{\times X}\) admits a complex structure.

Following \cite{Edmonds-Lee 1975}, a smooth manifold \(M\) is called \emph{stably complex} if the tangent bundle \(TM\) to \(M\) 
is stably complex, or equivalently, if there exists a smooth embedding of \(M\) into some Euclidean space such that the normal bundle 
of the embedding has a complex structure. It follows that the connected components of a stably complex manifold \(M\) all are either 
odd or even dimensional.

A smooth manifold \(M\) is called \emph{parallelizable} (resp., \emph{stably parallelizable}) if the connected components of \(M\) 
all have the same dimension, say \(m\), and
\[
T M \oplus \mbb{R}^n_{\times M} \approx \mbb{R}^{m+n}_{\times M}
\]
for \(n = 0\) (resp., \(n \geq 0\)). As noted in Section~\ref{sec: thickening}, \(\approx\) means that the two bundles in question 
are isomorphic as real vector bundles.

According to \cite[the proof of (3.2)]{Edmonds-Lee 1975}, any \(\mbb{F}_2\)-acyclic smooth manifold \(M\) is stably complex. However, 
for a prime \(p \geq 3\), an \(\mbb{F}_p\)-acyclic smooth manifold \(M\) is not necessarily stably complex.

By the Smith Theory and the work of Edmonds and Lee \cite[(3.1) and (3.2)]{Edmonds-Lee 1975}, the following proposition holds.

\begin{proposition}
Let \(G\) be a finite \(p\)-group for a prime \(p\), and let \(M\) be an \(\mbb{F}_p\)-acyclic stably complex \(G\)-manifold. Then
the fixed point set \(M^G\) is both \(\mbb{F}_p\)-acyclic and stably complex.
\end{proposition}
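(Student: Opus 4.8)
The plan is to prove the two assertions of the Proposition separately, using classical Smith Theory for the acyclicity statement and the Edmonds--Lee results for the stably complex statement, reducing in both cases to the situation of a cyclic group of prime order by an induction on the order of $G$.

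First I would recall the standard Smith Theory input: if $G$ is a finite $p$-group acting on an $\mbb{F}_p$-acyclic space $M$ (here a smooth $G$-manifold), then $M^G$ is $\mbb{F}_p$-acyclic. For a $p$-group this follows by taking a central subgroup $C \cong C_p$, applying the Smith inequality (or the fact that $M^C$ is $\mbb{F}_p$-acyclic when $M$ is) to conclude $M^C$ is $\mbb{F}_p$-acyclic, and then inducting on $|G|$ by considering the induced action of $G/C$ on $M^C$, whose fixed point set is $M^G$. This handles the $\mbb{F}_p$-acyclicity of $F = M^G$ without using the stably complex hypothesis at all.

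For the stably complex conclusion I would again reduce to $G = C_p$ by the same central-subgroup induction: if $C \cong C_p$ is central in $G$, then $M^C$ is an $\mbb{F}_p$-acyclic smooth $(G/C)$-manifold, and I would need to know that it is again stably complex so that the inductive hypothesis applies to the $(G/C)$-action on $M^C$ with fixed point set $(M^C)^{G/C} = M^G$. Thus the crux is the base case: for $G = C_p$ acting smoothly on an $\mbb{F}_p$-acyclic stably complex manifold $M$, show $M^G$ is stably complex. Here I would invoke Edmonds--Lee \cite[(3.1) and (3.2)]{Edmonds-Lee 1975} directly. The geometric content of their argument is that near the fixed point set the $C_p$-action on the normal bundle decomposes into isotypic summands, and a complex structure on (a stabilization of) $TM$ restricts to one on $(TM|_{M^G})$, which splits $G$-invariantly as $TM^G \oplus \nu$ where $\nu$ is the normal bundle; the $C_p$-representation structure lets one read off a complex structure on $TM^G$ after stabilization, using that the relevant $K$-theoretic obstruction vanishes on the $\mbb{F}_p$-acyclic base $M^G$. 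I would cite (3.1) for the normal bundle decomposition and the complex structure on $\nu$, and (3.2) for the deduction that $TM^G$ is then stably complex.

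The main obstacle I expect is making the inductive step clean: one must verify that the Edmonds--Lee conclusion is strong enough to re-enter the induction, i.e., that $M^C$ is not merely stably complex as an abstract manifold but that the residual $(G/C)$-action is compatible with a stably complex structure in whatever sense (3.1)--(3.2) require. In practice this is exactly what Edmonds and Lee set up, so the "proof" is really an orchestration of their statements rather than new work; the honest write-up is a one-paragraph reduction to $G = C_p$ plus a citation. I would present it in that spirit, being careful to note that for $p = 2$ the stably complex hypothesis on $M$ is automatic (every $\mbb{F}_2$-acyclic manifold being stably complex by \cite[the proof of (3.2)]{Edmonds-Lee 1975}), so the content is genuinely in the odd-prime case.
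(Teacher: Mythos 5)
Your proposal is correct and matches the paper's route exactly: the paper derives this Proposition directly from Smith Theory together with Edmonds--Lee (3.1) and (3.2), which is precisely the orchestration you describe, with the reduction to a central $C_p$ and induction on $|G|$ being the standard (implicit) bridge. Your observation that the inductive step only needs $M^C$ to be stably complex as a manifold (no equivariant complex structure required, since that is all the Proposition's hypothesis asks of a ``stably complex $G$-manifold'') is the right way to keep the induction clean.
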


Now, we recall restrictions on the tangent bundles of the fixed point sets in the case of smooth actions of finite groups not of prime power order,
which are described by Oliver \cite{Oliver 1996}.

\smallskip\noindent
{\bf Group theory definitions.} Let \(\frak{P}\) (resp., \(\neg\frak{P}\)) be the class of finite groups of prime power order (resp., not of prime power order). 
We split  \(\neg \frak{P}\) into four mutually disjoint classes \(\frak{A}\), \(\frak{B}\), \(\frak{C}\), and \(\frak{D}\), defined as follows.

\begin{itemize}
\smallskip
\item[1)] \(G \in \frak{D}\) if \(G\) has a series of subgroups \(K \trianglelefteq H \leq G\) such that the quotient \(H/K\) is isomorphic to \(D_{pq}\), 
the dihedral group of order \(2pq\), for some two distinct primes \(p\) and \(q\) (and thus, \(G\) has an element not of prime power order, which is
conjugate to its inverse in \(G\)).
\smallskip
\item[2)] \(G \in \frak{C}\) if \(G \not\in \frak{D}\) and \(G\) has an element not of prime power order, which is conjugate to its inverse in \(G\).
\smallskip
\item[3)] \(G \in \frak{B}\) if \(G \not\in \frak{C} \cup \frak{D}\) and \(G\) has an element not of prime power order.
\smallskip
\item[4)] \(G \in \frak{A}\) if \(G \in \neg\frak{P}\) and each element of \(G\) has prime power order.
\end{itemize}

\bigskip\noindent
{\bf Representation theory definitions.} Let \(G\) be a finite group. For the field \(\mbb{F} = \mbb{C}\) or \(\mbb{R}\), two \(\mbb{F}G\)-modules 
\(V_0\) and \(V_1\) are \emph{primary matched} if \(V_0\) and \(V_1\) are ismorphic as \(\mbb{F}P\)-modules for any prime power order subgroup \(P\) of \(G\).
Now, we define three classes \(\frak{M}_{\mbb{C}}\),  \(\frak{M}_{\mbb{C}+}\), and \(\frak{M}_{\mbb{R}}\) of finite groups \(G\). 
\begin{itemize}
\smallskip
\item[1)] \(G \in \frak{M}_{\mbb{C}}\) if there exist two primary matched \(\mbb{C}G\)-modules \(V_0\) and \(V_1\) such that 
               \(\dim_{\mbb{C}} V_0^G = 0\) and \(\dim_{\mbb{C}} V_1^G = 1\).
\smallskip
\item[2)] \(G \in \frak{M}_{\mbb{C}+}\) if there exist two primary matched self-conjugate \(\mbb{C}G\)-modules \(V_0\) and \(V_1\) such that 
                \(\dim_{\mbb{C}} V_0^G = 0\) and \(\dim_{\mbb{C}} V_1^G = 1\).
\smallskip
\item[3)] \(G \in \frak{M}_{\mbb{R}}\) if there exist two primary matched \(\mbb{R}G\)-modules \(V_0\) and \(V_1\) such that
                \(\dim_{\mbb{R}} V_0^G = 0\) and \(\dim_{\mbb{R}} V_1^G = 1\).
\end{itemize}

\smallskip\noindent
The complexification \(V \otimes_{\mbb{R}}\mbb{C}\) of an \(\mbb{R}G\)-module \(V\) is a self-conjugate \(\mbb{C}G\)-module and clearly, 
if \(G \in  \frak{M}_{\mbb{C}}\), \(G\) is not of prime power order. Therefore, 
\[
\frak{M}_{\mbb{R}} \subset \frak{M}_{\mbb{C}+} \subset \frak{M}_{\mbb{C}} \subset \neg \frak{P}.
\]

The following lemma goes back to Oliver \cite[Lemma~3.1]{Oliver 1996}.

\begin{lemma}\label{lem: groups via modules}
For a finite group \(G\), the following three conclusions hold.
\begin{itemize}
\item[\rm 1)] \(G \in \frak{M}_{\mbb{C}}\) if and only if \(G\) has an element not of prime powoer order.
\item[\rm 2)] \(G \in \frak{M}_{\mbb{C}+}\) if and only if \(G\) has an element not of prime powoer order, which is conjugate to its inverse.
\item[\rm 3)] \(G \in \frak{M}_{\mbb{R}}\) if and only if G has a series of subgroups \(K \trianglelefteq H \leq G\) such that the quotient \(H/K\) 
                      is isomorphic to \(D_{pq}\), the dihedral group of order \(2pq\), for some two distinct primes \(p\) and \(q\).
\end{itemize}
\end{lemma}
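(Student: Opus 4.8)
My plan is to reduce all three equivalences to pure representation theory and then handle sufficiency and necessity separately. For the translation: over \(\mbb{C}\) a module is determined up to isomorphism by its character, and every element of prime power order lies in a cyclic subgroup of prime power order, so two \(\mbb{C}G\)-modules are primary matched precisely when their characters agree on all elements of prime power order; also \(\dim_{\mbb{C}}V^G=\langle\chi_V,1_G\rangle\). Hence \(G\in\frak{M}_{\mbb{C}}\) if and only if there is a virtual character \(\theta\in R(G)\) vanishing on every element of prime power order with \(\langle\theta,1_G\rangle=1\): given such a \(\theta\), write \(\theta=[V_1]-[V_0]\) with \(V_1=1_G\oplus(\text{effective positive part})\) and \(V_0=(\text{effective negative part})\), so \(V_0\) has no trivial constituent and \(V_1\) exactly one, whence \(\dim V_0^G=0\), \(\dim V_1^G=1\). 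For \(\frak{M}_{\mbb{C}+}\) one asks in addition that \(\theta\) be real-valued (then \(V_0,V_1\) are automatically self-conjugate, since \(n_\chi=n_{\bar\chi}\)); for \(\frak{M}_{\mbb{R}}\) that \(\theta\) lie in the image of \(RO(G)\to R(G)\), using the Noether--Deuring theorem to detect primary matched \(\mbb{R}G\)-modules on complexifications and noting that \(\langle\theta,1_G\rangle\) then records the net multiplicity of the trivial real irreducible.

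\emph{Sufficiency.} Let \(g\in G\) have order not a prime power and fix distinct primes \(p,q\mid|g|\), so \(H:=\langle g^{|g|/pq}\rangle\cong C_{pq}\). Indexing \(\mathrm{Irr}(C_{pq})\) by \(\widehat{C_p}\times\widehat{C_q}\), the virtual character
\[
\theta'=(1-\chi_{(1,0)})(1-\chi_{(0,1)})=\chi_{(0,0)}-\chi_{(1,0)}-\chi_{(0,1)}+\chi_{(1,1)}
\]
vanishes on all elements of prime power order of \(C_{pq}\) and has \(\langle\theta',1\rangle=1\). I set \(\theta:=\mathrm{Ind}_H^G\theta'\); Frobenius reciprocity gives \(\langle\theta,1_G\rangle=\langle\theta',1_H\rangle=1\), and the Mackey formula shows \(\theta\) still vanishes on all elements of prime power order of \(G\), since conjugating such an element into \(H\) places it in a subgroup of \(H\) of prime power order, on which \(\theta'\) restricts to \(0\); so \(G\in\frak{M}_{\mbb{C}}\). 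If moreover \(g\) is conjugate to \(g^{-1}\), the conjugating element inverts the generator of \(H\), hence normalizes \(H\) and acts on it by inversion; conjugation by it carries each \(\mbb{C}H\)-module to its complex conjugate and commutes with \(\mathrm{Ind}_H^G\), so \(\theta\) is real-valued and \(G\in\frak{M}_{\mbb{C}+}\). Finally, if \(G\) has a section \(H/K\cong D_{pq}\), I build \(\theta'\) on the rotation subgroup \(C_{pq}\le D_{pq}\), induce it to \(D_{pq}\) — the reflections invert \(C_{pq}\) and every irreducible of \(D_{pq}\) is of real type, so \(\mathrm{Ind}_{C_{pq}}^{D_{pq}}\theta'\) is realizable over \(\mbb{R}\) — then inflate along \(H\to H/K\) and induce up to \(G\); inflation and induction preserve realizability over \(\mbb{R}\), the augmentation, and the vanishing on prime power order elements, so \(G\in\frak{M}_{\mbb{R}}\).

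\emph{Necessity.} For (1), if every element of \(G\) has prime power order then primary matched modules agree on every cyclic subgroup, hence are isomorphic, so \(\dim V_0^G=\dim V_1^G\) and \(0\ne1\) is impossible. For (2), assume no element of non-prime-power order of \(G\) is conjugate to its inverse and let \(\theta\) be real-valued vanishing on all elements of prime power order; averaging over conjugacy classes, the non-prime-power classes split into free pairs \(\{\mathcal{C},\mathcal{C}^{-1}\}\) on which \(\theta\) is constant, so \(\langle\theta,1_G\rangle=\tfrac{2}{|G|}\sum|\mathcal{C}|\,\theta(\mathcal{C})\) with one class summed per pair; since the sum is an algebraic integer and \(2\) times it is a rational integer, the sum is a rational integer, which already forces \(\langle\theta,1_G\rangle\) to be even when \(|G|\) is odd. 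For even \(|G|\) this must be upgraded using the integrality of virtual characters; I would do so via Brauer's induction theorem, restricting to \(p\)-elementary subgroups, on which the vanishing conditions (restriction to each Sylow is zero) force the augmentation to be twice a sum, exactly as in the cyclic computation above. Hence \(\langle\theta,1_G\rangle\ne1\) and \(G\notin\frak{M}_{\mbb{C}+}\). For (3), the groups with a non-prime-power element conjugate to its inverse but without a \(D_{pq}\)-section form the class \(\frak{C}\), and there one must show every realizable-over-\(\mbb{R}\) virtual character vanishing on elements of prime power order has even augmentation; the same local reduction applies, but the obstruction to realizability over \(\mbb{R}\) is now a Schur-index phenomenon, and tracing which \(p\)-elementary (or hyperelementary) subquotients can carry such a character with odd augmentation produces — via a conjugation \(xgx^{-1}=g^{-1}\) and the dihedral relations it forces — a section isomorphic to \(D_{pq}\), a contradiction.

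The main obstacle is the necessity in (2) and (3): the delicate point is to upgrade the naive ``non-prime-power classes come in inverse pairs'' parity argument to a proof valid for groups of even order, and, for (3), to recognize that the residual Schur-index obstruction is controlled precisely by the presence of a \(D_{pq}\)-section. Everything else — the reduction to characters, the explicit construction on \(C_{pq}\), and the induction/inflation bookkeeping — is routine.
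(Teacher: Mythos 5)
The paper offers no proof of this lemma at all: it is quoted directly from Oliver's Lemma~3.1 in \cite{Oliver 1996}, so the only benchmark is Oliver's original argument. Your reduction to character theory is correct, and your sufficiency arguments are complete and sound: the virtual character \(\theta'=(1-\chi_{(1,0)})(1-\chi_{(0,1)})\) on \(C_{pq}\leq\langle g\rangle\) vanishes precisely on the prime-power-order elements and has augmentation \(1\); induction to \(G\) preserves both properties by Frobenius reciprocity and the induced-character formula; an element inverting \(g\) normalizes \(C_{pq}\) and identifies \(\mathrm{Ind}\,\theta'\) with its conjugate, giving a real-valued class; and a \(D_{pq}\)-section lets you route through the dihedral group, all of whose irreducibles have real type, so the inflated-and-induced class lies in the image of \(RO(G)\). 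The necessity in (1) is likewise fine.

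The genuine gap is the necessity in (2) and (3), which is where the real content of the lemma sits, and which you acknowledge you have not proved. Your conjugacy-class-pairing computation shows only that \(|G|\cdot\langle\theta,1_G\rangle\in 2\mbb{Z}\), which is vacuous whenever \(|G|\) is even -- and the groups at issue in (2) and (3) (those in \(\frak{B}\) and \(\frak{C}\)) are typically of even order. The proposed repair via ``Brauer induction to \(p\)-elementary subgroups'' is a plan rather than an argument, and as stated it aims at the wrong family: for real-valued, respectively real-realizable, virtual characters the appropriate induction theorem is Witt--Berman, whose \(\mbb{R}\)-elementary subgroups at the prime \(2\) have the form \(\langle c\rangle\rtimes P\) with \(P\) a \(2\)-group acting on the cyclic group \(\langle c\rangle\) trivially or by inversion. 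It is exactly the analysis of which such subgroups (and their quotients) can carry a suitable class of odd augmentation that produces the two group-theoretic conditions -- an element of non-prime-power order conjugate to its inverse in case (2), and a \(D_{pq}\)-section in case (3) -- and none of that computation appears in your proposal. Until that mod-\(2\) local analysis is carried out, parts (2) and (3) are only proved in one direction; the honest course here is either to supply the Witt--Berman argument in full or to do what the paper does and cite Oliver's Lemma~3.1.
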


By Lemma~\ref{lem: groups via modules} and the group theory definitions above,
\[
\frak{A} = \neg\frak{P} \setminus \frak{M}_{\mbb{C}}, \ \ \frak{B} = \frak{M}_{\mbb{C}} \setminus \frak{M}_{\mbb{C}+}, \ \
\frak{C} = \frak{M}_{\mbb{C}+} \setminus \frak{M}_{\mbb{R}}, \ \ \frak{D} = \frak{M}_{\mbb{R}}.
\]

For a  class \(\frak{G}\) of finite groups, let \(\frak{G}^{\triangleleft}\) denote the class of groups \(G \in \frak{G}\) with a normal \(2\)-Sylow subgroup. 
If \(G \in \frak{C}\) or \(\frak{D}\), then \(G\) does not contain a normal \(2\)-Sylow subgroup, i.e., 
\(\frak{C}^{\triangleleft} = \frak{D}^{\triangleleft} = \varnothing\).

Now, we split \(\neg\frak{P}\) into the following six mutually disjoint classes:
\[
\frak{A}^{\triangleleft}, \ \ \frak{B}^{\triangleleft}, \ \ \frak{A} \setminus \frak{A}^{\triangleleft}, \ \ \frak{B} \setminus \frak{B}^{\triangleleft}, 
\ \ \frak{C}, \ \ \frak{D}.
\]

\noindent
{\bf \msl{K}-theory definitions.} For a finite dimensional CW complex \(X\), let \(\widetilde{KO}(X)\), \(\widetilde{KU}(X)\), and \(\widetilde{KSp}(X)\) 
denote (respectively) the real, complex, and quaternion reduced \(K\)-theory of \(X\). Following Oliver \cite{Oliver 1996}, consider:

\begin{itemize}
\smallskip
\item[1)] the complexification of real vector bundles \(E\) over \(X\):\\ 
               \(c_{\mbb{R}} \colon \widetilde{KO}(X) \to \widetilde{KU}(X), [E] \mapsto [E \otimes_{\mbb{R}} (\mbb{C}_{\times X})]\)
\smallskip
\item[2)] the quaternionization of complex vector bundles \(E\) over \(X\):\\
                  \(q_{\mbb{c}} \colon \widetilde{KU}(X) \to \widetilde{KSp}(X), [E] \mapsto [E \otimes_{\mbb{C}} (\mbb{H}_{\times X})]\)
\smallskip
\item[3)] the complexification of quaternionic vector bundles \(E\) over \(X\):\\
                \(c_{\mbb{H}} \colon \widetilde{KO}(X) \to \widetilde{KU}(X), [E] \mapsto [\msf{Res}^{\mbb{H}}_{\mbb{C}}(E)]\)
\smallskip
\item[4)] the realification of complex vector bundles \(E\) over \(X\):\\
                \(r_{\mbb{C}} \colon \widetilde{KO}(X) \to \widetilde{KU}(X), [E] \mapsto [\msf{Res}^{\mbb{C}}_{\mbb{R}}(E)]\)
\end{itemize}

\medskip\noindent
{\bf The notion of  \(\widetilde{KO}(X, G)\).}
In the case where \(G \in \frak{P}\), set
\[
\widetilde{KO}(X, G) = r_{\mbb{C}}\big(\widetilde{KU}(X)\big).
\]

If \(F\) is a smooth manifold, then \([TF] \in \widetilde{KO}(F, G)\) if and only if \(F\) is stably complex. 
Therefore, in Theorem~\ref{thm: Y p-acyclic}, the restriction that \(F\) is stably complex can be restated as follows: \([TF] \in \widetilde{KO}(F, G)\).

\smallskip

In the case where \(G \in \neg\frak{P}\) and \(X\) is a finite CW complex, set
\begin{itemize}
\smallskip
\item[1)]  \(\widetilde{KO}(X, G) = r_{\mbb{C}}\big(\msf{tor}\,\widetilde{KU}(X)\big)\) for \(G \in \frak{A}^{\triangleleft}\).
\item[2)]  \(\widetilde{KO}(X, G) = r_{\mbb{C}}\big(\widetilde{KU}(X)\big)\) for \(G \in \frak{B}^{\triangleleft}\).
\item[3)]  \(\widetilde{KO}(X, G) = \msf{tor}\,\widetilde{KO}(X)\) for \(G \in \frak{A} \setminus \frak{A}^{\triangleleft}\).
\item[4)]  \(\widetilde{KO}(X, G) = \msf{tor}\,\widetilde{KO}(X) + r_{\mbb{C}}\big(\widetilde{KU}(X)\big)\) for \(G \in \frak{B} \setminus \frak{B}^{\triangleleft}\).
\item[5)]  \(\widetilde{KO}(X, G) = c_{\mbb{R}}^{-1}\big(\msf{tor}\, \widetilde{KU}(X) +  c_{\mbb{H}} \big(\widetilde{KSp}(X)\big)\big)\) for \(G \in \frak{C}\).
\item[6)]  \(\widetilde{KO}(X, G) = \widetilde{KO}(X)\) for \(G \in \frak{D}\).
\end{itemize}

\smallskip

The subgroup of \emph{quasidivisible elements} of an abelian group \(A\), denoted by \(\msf{qdiv}\, A\), is the intersection of the kernels 
of all homomorphisms from \(A\) into free abelian groups. If \(A\) is finitely generated, \(\msf{qdiv}\, A = \msf{tor}\, A\).

In the case where \(X\) is not finite, in the definition of   \(\widetilde{KO}(X, G)\), we replace the torsion groups 
\(\msf{tor}\,\widetilde{KO}(X)\) and \(\msf{tor}\,\widetilde{KU}(X)\) by the groups
\[
\msf{qdiv}\,\widetilde{KO}(X) \quad \text{and} \quad \msf{qdiv}\,\widetilde{KU}(X),
\]
respectively (cf., \cite[Theorems~0.1 and 0.2]{Oliver 1996}).

\begin{definition}\label{def: Oliver obstruction}
\emph{Let \(G\) be a finite group and let \(F\) be a smooth manifold with the trivial action of \(G\).
For a real \(G\)-vector bundle \(E\) over \(F\), the element
\[
\msf{Oliv}(E) \in \widetilde{KO}(F) \oplus \bigoplus_{P \leq G} \widetilde{KO}_P(F)_{(p)} / \msf{div}_{(p)}^{\infty}(F),
\]
called the \emph{Oliver obstruction} of \(E\), is defined by restricting the \(G\)-action on \(E\) to the trivial subgroup \(I\) of \(G\)
and to each \(p\)-subgroup \(P\) of \(G\) with \(p\big| |G|\), to obtain the following elements of the occurring summands.
\smallskip
\begin{itemize}
\item[1)] \([\msf{Res}^G_I(E)] \in \widetilde{KO}(F)\) and 
\item[2)] \([\msf{Res}^G_P(E)] +  \msf{div}_{(p)}^{\infty}(F) \in \widetilde{KO}_P(F)_{(p)} / \msf{div}_{(p)}^{\infty}(F)\), 
\end{itemize}
where \(\widetilde{KO}(F)_{(p)} / \msf{div}_{(p)}^{\infty}(F)\) is the quotient of the localized group \(\widetilde{KO}_P(F)_{(p)}\) 
by the subgroup \(\msf{div}_{(p)}^{\infty}(F)\) of infinitely \(p\)-divisible elements in \(\widetilde{KO}_P(F)_{(p)}\). 
}
\end{definition}

By \cite[Lemmas 3.1, 3.2, and 3.3]{Oliver 1996}, the condition that \(\msf{Oliv}(E) = 0\) for a real \(G\)-vector bundle 
\(E\) over \(F\) with \(E^G \approx TF\), is equivalent to the condition that \([TF] \in \widetilde{KO}(F, G)\) (cf., \cite[Theorem 0.2]{Oliver 1996}).

\begin{lemma}\label{lem: Oliver obstruction}{\rm (Oliver \cite{Oliver 1996})}
Let \(G\) be a finite group not of prime power order. Then, for a smooth manifold \(F\), there exists a real \(G\)-vector bundle \(E\) over \(F\)
such that \(E^G \approx T F\) and \(\msf{Oliv}(E) = 0\), if and only if \([TF] \in \widetilde{KO}(F, G)\).
\end{lemma}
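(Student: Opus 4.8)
Since \(G\) acts trivially on \(F\), a choice of \(G\)-invariant fibre metric splits any real \(G\)-vector bundle \(E\) over \(F\) as \(E = E^G \oplus E^{\perp}\) with \((E^{\perp})^G = 0\); hence the condition \(E^G \approx TF\) means precisely that \(E = TF \oplus E^{\perp}\) for some real \(G\)-vector bundle \(E^{\perp}\) over \(F\) with \((E^{\perp})^G = 0\). The first step is to unwind Definition~\ref{def: Oliver obstruction} for such an \(E\). By additivity of the restriction maps and the fact that \(TF\) carries the trivial \(G\)-action, the \(I\)-component of \(\msf{Oliv}(TF \oplus E^{\perp})\) equals \([TF] + [\msf{Res}^G_I E^{\perp}] \in \widetilde{KO}(F)\), while for each prime \(p \mid |G|\) and each \(p\)-subgroup \(P \leq G\) the \(P\)-component is the class of \(t_P([TF]) + [\msf{Res}^G_P E^{\perp}]\) in \(\widetilde{KO}_P(F)_{(p)} / \msf{div}_{(p)}^{\infty}(F)\), where \(t_P \colon \widetilde{KO}(F) \to \widetilde{KO}_P(F)_{(p)}\) is induced by endowing a bundle with the trivial \(P\)-action. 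Thus \(\msf{Oliv}(TF \oplus E^{\perp}) = 0\) is equivalent to the two requirements
\[
[\msf{Res}^G_I E^{\perp}] = -[TF] \quad\text{and}\quad [\msf{Res}^G_P E^{\perp}] \equiv t_P(-[TF]) \pmod{\msf{div}_{(p)}^{\infty}(F)} \ \text{ for all such } P,
\]
the first taken in \(\widetilde{KO}(F)\) and the second in \(\widetilde{KO}_P(F)_{(p)}\).

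The lemma therefore reduces to the following purely \(K\)-theoretic assertion, which I would isolate as the key step: the set of classes \(x \in \widetilde{KO}(F)\) that admit a real \(G\)-vector bundle \(E^{\perp}\) over \(F\) with \((E^{\perp})^G = 0\), \([\msf{Res}^G_I E^{\perp}] = x\), and \([\msf{Res}^G_P E^{\perp}] \equiv t_P(x) \pmod{\msf{div}_{(p)}^{\infty}(F)}\) for every \(p\)-subgroup \(P\), is exactly the subgroup \(\widetilde{KO}(F,G)\). Granting this, both implications are immediate. If \([TF] \in \widetilde{KO}(F,G)\), then \(-[TF] \in \widetilde{KO}(F,G)\) since the latter is a subgroup, so we may pick a witnessing \(E^{\perp}\) and set \(E = TF \oplus E^{\perp}\): then \(E^G \approx TF\) and, by the computation above, \(\msf{Oliv}(E) = 0\). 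Conversely, given \(E\) with \(E^G \approx TF\) and \(\msf{Oliv}(E) = 0\), writing \(E = TF \oplus E^{\perp}\) exhibits \(-[TF]\) as a member of that set, hence \(-[TF] \in \widetilde{KO}(F,G)\) and so \([TF] \in \widetilde{KO}(F,G)\).

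The substantial ingredient — and the step I expect to be the main obstacle were one to prove everything from scratch — is the identification of that set of realizable classes with \(\widetilde{KO}(F,G)\). This is exactly \cite[Lemmas~3.1, 3.2, and 3.3]{Oliver 1996} (compare \cite[Theorem~0.2]{Oliver 1996}): one argues separately in the six cases \(\frak{A}^{\triangleleft}\), \(\frak{B}^{\triangleleft}\), \(\frak{A} \setminus \frak{A}^{\triangleleft}\), \(\frak{B} \setminus \frak{B}^{\triangleleft}\), \(\frak{C}\), \(\frak{D}\), and, using the module-theoretic descriptions of these classes supplied by Lemma~\ref{lem: groups via modules} together with the way real, complex, and quaternionic \(G\)-modules restrict to prime-power-order subgroups, shows that the \(p\)-divisibility constraints cut \(\widetilde{KO}(F)\) down to \(r_{\mbb{C}}(\msf{tor}\,\widetilde{KU}(F))\), \(r_{\mbb{C}}(\widetilde{KU}(F))\), \(\msf{tor}\,\widetilde{KO}(F)\), \(\msf{tor}\,\widetilde{KO}(F) + r_{\mbb{C}}(\widetilde{KU}(F))\), \(c_{\mbb{R}}^{-1}\bigl(\msf{tor}\,\widetilde{KU}(F) + c_{\mbb{H}}(\widetilde{KSp}(F))\bigr)\), and \(\widetilde{KO}(F)\), respectively — with \(\msf{tor}\) replaced by \(\msf{qdiv}\) when \(F\) is not a finite complex, in accordance with the remark following Definition~\ref{def: Oliver obstruction}. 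Since these are precisely the six clauses defining \(\widetilde{KO}(F,G)\), this completes the argument. In writing it up I would carry out the reduction of the first two paragraphs in full detail and then quote Oliver's lemmas, pausing only to recall the mechanism in the two extreme cases: \(G \in \frak{D}\), where Lemma~\ref{lem: groups via modules}(3) makes the constraints vacuous so the realizable set is all of \(\widetilde{KO}(F)\), and \(G \in \frak{B}^{\triangleleft}\), where only the classes in the image of complexification survive.
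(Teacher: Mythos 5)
Your proposal is correct and follows essentially the same route as the paper: the paper offers no argument beyond attributing the equivalence to Oliver's Lemmas 3.1--3.3 (cf.\ his Theorem 0.2), and your proof consists of a sound formal reduction --- splitting \(E \approx_G TF \oplus E^{\perp}\) over the trivially-acted-on base \(F\) and unwinding Definition~\ref{def: Oliver obstruction} into the conditions \([\msf{Res}^G_I E^{\perp}] = -[TF]\) and \([\msf{Res}^G_P E^{\perp}] \equiv t_P(-[TF])\) modulo \(\msf{div}_{(p)}^{\infty}(F)\) --- followed by an appeal to exactly those lemmas of Oliver for the identification of the realizable classes with the subgroup \(\widetilde{KO}(F,G)\). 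The only content you add beyond the paper's citation is this explicit bookkeeping, which is correct.
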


Let \(G\) be a finite group and let \(M\) be a smooth \(G\)-manifold. Set \(F = M^G\) and \(E = TM|_F\), the \(G\)-vector bundle \(TM\) restricted to \(F\) . 
Then
\[
E \approx_G TF \oplus N,
\]
the Whitney sum of the tangent bundle \(TF\) upon which \(G\) acts trivially, and the \(G\)-equivariant normal bundle \(N\) of \(F\) in \(M\). 
For any point \(x \in F\), the fiber \(N_x\) over \(x\) of \(N\) is an \(\mbb{R}G\)-module with \(N_x^G = 0\) and thus, \(E^G \approx TF\).

By the arguments in \cite[the discussion after Theorem~0.1]{Oliver 1996}, the following proposition holds.

\begin{proposition}\label{pro: Oliver obstruction}{\rm (Oliver \cite{Oliver 1996})}
Let \(G\) be a finite group not of prime power order, and let \(M\) be a \(\mbb{Z}\)-acyclic smooth \(G\)-manifold with fixed point set \(F \neq \varnothing\).
Then
\[
E^G \approx TF \quad \text{and} \quad \msf{Oliv}(E) = 0,
\] 
where \(E = TM|_F\).
\end{proposition}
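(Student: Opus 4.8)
The plan is to verify the two assertions separately; the first is a direct consequence of the slice theorem, and the second follows by unwinding Definition~\ref{def: Oliver obstruction} once two vanishing facts are in hand — that $\widetilde{KO}(M)=0$ because $M$ is $\mbb{Z}$-acyclic, and that $M^P$ is $\mbb{F}_p$-acyclic for every $p$-subgroup $P\le G$ by Smith theory. Throughout, $M$ and all its fixed point sets are finite-dimensional smooth manifolds, so the relevant Atiyah--Hirzebruch spectral sequences converge.

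First I would record $E^G\approx TF$. As recalled just before the statement, the slice theorem yields a $G$-equivariant splitting $E=TM|_F\approx_G TF\oplus N$ in which $G$ acts trivially on $TF$ and $N$ is the equivariant normal bundle of $F=M^G$ in $M$, with $N_x^G=0$ for every $x\in F$; since taking $G$-fixed subbundles commutes with the Whitney sum of $G$-vector bundles over the trivially acted space $F$, we get $E^G\approx TF\oplus 0=TF$. So the content lies in $\msf{Oliv}(E)=0$, which by Definition~\ref{def: Oliver obstruction} amounts to two facts: (i) $[\msf{Res}^G_I(E)]=0$ in $\widetilde{KO}(F)$; and (ii) for every prime $p\mid|G|$ and every $p$-subgroup $P\le G$, the class $[\msf{Res}^G_P(E)]$ lies in $\msf{div}_{(p)}^{\infty}(F)\subseteq\widetilde{KO}_P(F)_{(p)}$. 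For (i), let $i\colon F\hookrightarrow M$ be the inclusion; then $\msf{Res}^G_I(E)$ is the underlying real vector bundle $i^{\ast}(TM)$, so $[\msf{Res}^G_I(E)]=i^{\ast}[TM]$. Because $M$ is $\mbb{Z}$-acyclic, $\widetilde{H}^{\ast}(M;A)=0$ for every coefficient group $A$, so the Atiyah--Hirzebruch spectral sequence for $\widetilde{KO}(M)$ has vanishing $E_2$-term; hence $\widetilde{KO}(M)=0$, $[TM]=0$, and (i) holds.

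For (ii), fix $p\mid|G|$ and a $p$-subgroup $P\le G$. Since $M$ is $\mbb{Z}$-acyclic, hence $\mbb{F}_p$-acyclic, and $P$ is a $p$-group, Smith theory (\cite[Ch.~III]{Bredon 1972}) shows $M^P$ is $\mbb{F}_p$-acyclic; note $F=M^G\subseteq M^P$ and write $i_P\colon F\hookrightarrow M^P$. The $\mbb{F}_p$-acyclicity of $M^P$ makes $\widetilde{H}^{\ast}(M^P;\mbb{Z})$ $p$-divisible with no $p$-torsion (and $\widetilde{H}^{\ast}(M^P;\mbb{F}_2)=0$ when $p=2$), and the Atiyah--Hirzebruch spectral sequences then show that $\widetilde{KO}(M^P)_{(p)}$, $\widetilde{KU}(M^P)_{(p)}$ and $\widetilde{KSp}(M^P)_{(p)}$ consist entirely of infinitely $p$-divisible elements (indeed they vanish when $M^P$ is compact). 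Now $\msf{Res}^G_P(E)$ is the restriction to $F$ of the $P$-vector bundle $\msf{Res}^G_P(TM)|_{M^P}$, which, since $P$ acts trivially on $M^P$, splits into its isotypic summands: the trivial-isotype summand is $T(M^P)$, and each nontrivial-isotype summand is, over the corresponding one of $\mbb{R}$, $\mbb{C}$, $\mbb{H}$, an ordinary vector bundle on $M^P$. Consequently every homogeneous component of $[\msf{Res}^G_P(E)]$ in $\widetilde{KO}_P(F)_{(p)}$ is the image under $i_P^{\ast}$ of a class in one of $\widetilde{KO}(M^P)_{(p)}$, $\widetilde{KU}(M^P)_{(p)}$, $\widetilde{KSp}(M^P)_{(p)}$; as $i_P^{\ast}$ carries infinitely $p$-divisible elements to infinitely $p$-divisible elements, we conclude $[\msf{Res}^G_P(E)]\in\msf{div}_{(p)}^{\infty}(F)$. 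Together with (i), this gives $\msf{Oliv}(E)=0$.

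The step I expect to require the most care is the identification used in (ii): one must match $\widetilde{KO}_P(F)$, in the form used by Oliver (cf.~\cite[Theorems~0.1 and 0.2]{Oliver 1996}), with a direct sum of ordinary reduced $K$-groups of $F$ indexed by the real irreducible representations of $P$ and keyed by their endomorphism algebras, and check compatibility with the transformations $c_{\mbb{R}}$, $c_{\mbb{H}}$, $r_{\mbb{C}}$, $q_{\mbb{c}}$, so that localization at $p$ really annihilates every component. This is precisely the content of \cite[Lemmas~3.1--3.3]{Oliver 1996} together with the discussion following \cite[Theorem~0.1]{Oliver 1996}, which I would invoke rather than reprove; the point of the present proposition is just that a $\mbb{Z}$-acyclic smooth $G$-manifold automatically supplies the required vanishing, namely $\widetilde{KO}(M)=0$ and the infinite $p$-divisibility of $\widetilde{KO}(M^P)_{(p)}$, exactly as Oliver's disks do. Nothing here uses compactness of $M$, so the same argument covers the open case needed in Theorem~\ref{thm: Y acyclic}.
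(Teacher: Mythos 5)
Your proposal is correct and follows essentially the same route as the paper's own proof: $\mbb{Z}$-acyclicity of $M$ kills the class $[\msf{Res}^G_I(E)]$ in $\widetilde{KO}(F)$, and Smith theory applied to $M^P$ together with $p$-localization handles the summands $\widetilde{KO}_P(F)_{(p)}/\msf{div}_{(p)}^{\infty}(F)$. If anything, your treatment of the non-compact case (working with infinitely $p$-divisible elements rather than asserting outright vanishing in $\widetilde{KO}_P(F)_{(p)}$) is slightly more careful than the paper's.
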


\section{Oliver's fixed point set theorems}
\label{sec: fixed point set}

First, we summarize the main results of Oliver \cite{Oliver 1975}, where \(F = X^G\) means that \(F\) and \(X^G\) are homeomorphic 
and \(F \simeq D^G\) means that \(F\) and \(D^G\) have the same homotopy type.

\begin{theorem}\label{thm: Oliver F non-empty}{\rm (Oliver \cite{Oliver 1975})}
Let \(G\) be a finite group not of prime power order, and let \(F\) be a finite CW complex such that \(F \neq \varnothing\). 
Then the following three conclusions are equivalent.
\begin{itemize}
\item[\rm 1)]  \(\chi(F) \equiv 1 \pmod{n_G}\).
\item[\rm 2)]  \(F = X^G\) for a finite contractible \(G\)-CW complex \(X\).
\item[\rm 3)]  \(F \simeq D^G\) for a smooth action of \(G\) on a disk \(D\). 
\end{itemize}
\end{theorem}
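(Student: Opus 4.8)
\emph{Plan of proof.} I would establish the cycle $(1)\Rightarrow(2)\Rightarrow(3)\Rightarrow(1)$; together with the immediate $(2)\Rightarrow(1)$ this gives all three equivalences. The hypothesis that $G$ is not of prime power order is used throughout: for a $p$-group the statement fails by Smith theory, and the Oliver number is defined only in the present setting.

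\emph{The easy implications.} If $X$ is a finite contractible $G$-CW complex with $X^{G}=F$, then $\chi(F)-1=\chi(X^{G})-1$ lies in $n_{G}\cdot\mathbb{Z}$ by the very definition of $n_{G}$, the only substantive point being the prior verification that the set of all such differences is closed under addition and negation, hence a subgroup of $\mathbb{Z}$. This gives $(2)\Rightarrow(1)$. For $(3)\Rightarrow(1)$, a disk $D$ carrying a smooth $G$-action is a finite contractible $G$-CW complex by the equivariant triangulation theorems of \cite{Illman 1983} and \cite{Matumoto-Shiota 1986}, so $\chi(F)=\chi(D^{G})\equiv 1\pmod{n_{G}}$ by the previous case.

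\emph{The implication $(2)\Rightarrow(3)$, by equivariant thickening.} Given a finite contractible $G$-CW complex $X$ with $X^{G}=F$, embed $F$ simplicially in a Euclidean space and let $F'$ be a smoothed closed regular neighbourhood, a compact parallelizable smooth manifold into which $F$ includes as a deformation retract. The pushout $X'=X\cup_{F}F'$, with trivial $G$-action on $F'$, is a finite $G$-CW complex with $(X')^{G}=F'$, and since $F\hookrightarrow F'$ is a cofibration and a homotopy equivalence the inclusion $X\hookrightarrow X'$ is a homotopy equivalence, so $X'$ is contractible. Taking the trivial $G$-vector bundle $E=\mathbb{R}^{m}_{\times X'}$ with $m=\dim F'$, one has $(E|_{F'})^{G}=\mathbb{R}^{m}_{\times F'}\approx TF'$ because $F'$ is parallelizable, so Theorem~\ref{thm: thickening F non-empty} produces a compact smooth $G$-manifold $M$ of the $G$-homotopy type of $X'$, hence contractible, with $M^{G}$ diffeomorphic to $F'$. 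Replacing $M$ by $M\times D^{k}$ for a large $k\geq 3$ yields a compact contractible smooth manifold of dimension $\geq 6$ with simply connected boundary; by the $h$-cobordism theorem this is a disk $D$, and $D^{G}=M^{G}\times D^{k}=F'\times D^{k}\simeq F$, which is $(3)$.

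\emph{The implication $(1)\Rightarrow(2)$: the core of the theorem.} Regard $F$ as a $G$-CW complex with trivial action. One first realizes $F$ as the fixed point set of a highly connected finite $G$-CW complex all of whose non-fixed cells are free — for instance the iterated join $Z_{0}=F*G*\dots*G$ with $k$ copies of $G$ acting freely: then $(A*B)^{G}=A^{G}*B^{G}$ gives $Z_{0}^{G}=F$, the $k$-fold join of nonempty spaces is $(k-2)$-connected so $Z_{0}$ is $(k-1)$-connected, and $\widetilde{H}_{n}(Z_{0})\cong\widetilde{H}_{n-k}(F)^{(|G|-1)^{k}}$ is finitely generated and vanishes for $n<k$. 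One then attaches further $G$-cells — of free type and of type $G/P$ for prime power order subgroups $P\leq G$ — to kill the remaining reduced homology; a cell of type $G/P$ with $P\neq 1$ leaves the fixed set $F$ unchanged but changes $\chi$ by $\pm[G:P]$, which is precisely what allows one to escape the congruence modulo $|G|$ that the free cells alone would force. A chain-level analysis over $\mathbb{Z}G$ and its $p$-localizations shows that the obstruction to reaching a $\mathbb{Z}$-acyclic — hence, being simply connected, contractible — finite $G$-CW complex with fixed set $F$ reduces to expressing $1-\chi(F)$ as a combination of the indices $[G:P]$ compatible with acyclicity, and the group of integers so realizable is, by definition, $n_{G}\cdot\mathbb{Z}$; thus $\chi(F)\equiv 1\pmod{n_{G}}$ suffices. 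I expect the real difficulty to lie entirely here: in identifying this obstruction group with $n_{G}\cdot\mathbb{Z}$ and carrying out the group-theoretic computation of $n_{G}$ (in particular, $n_{G}=1$ exactly when $G$ admits no series $P\trianglelefteq H\trianglelefteq G$ with $P$ and $G/H$ of prime power order and $H/P$ cyclic), which rests on Oliver's induction theory for the pertinent representation- and $K$-theoretic functors on finite $\mathbb{Z}$-acyclic $G$-CW complexes.
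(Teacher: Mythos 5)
The paper does not prove this theorem at all: it is quoted verbatim as Oliver's result from \cite{Oliver 1975}, so there is no in-paper argument to compare yours against. Judged on its own terms, your proposal is structurally correct for the implications that do not carry the real content. The derivation of $(2)\Rightarrow(1)$ from the definition of $n_G$, and of $(3)\Rightarrow(1)$ via equivariant triangulation \cite{Illman 1983, Matumoto-Shiota 1986} plus homotopy invariance of $\chi$, are both fine. Your $(2)\Rightarrow(3)$ via a parallelizable regular neighbourhood $F'$ and Theorem~\ref{thm: thickening F non-empty} also works (modulo the standard care needed because a finite CW complex is only homotopy equivalent to, not equal to, a finite simplicial complex, which is harmless since $(3)$ only asks for homotopy type); note that the Remark following Theorem~\ref{thm: thickening F empty} already lets you take $M$ to be a disk when $X$ is finite and contractible, so the $M\times D^k$ and $h$-cobordism step is unnecessary, though not wrong.

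The genuine gap is in $(1)\Rightarrow(2)$, which is the entire content of Oliver's theorem. Your construction of $Z_0=F*G*\dots*G$ and the bookkeeping $\chi\mapsto\chi\pm[G:P]$ for added cells of type $G/P$ correctly set the stage, but the decisive claim --- that the only obstruction to killing the remaining reduced homology by attaching cells of orbit types $G/P$ with $P$ of prime power order is the class of $1-\chi(F)$ in $\mathbb{Z}/n_G$ --- is asserted, not proved, and your phrase ``the group of integers so realizable is, by definition, $n_G\cdot\mathbb{Z}$'' is circular: $n_G\mathbb{Z}$ is defined as the set of values $\chi(X^G)-1$ over finite contractible $G$-CW complexes $X$, not as the set of Euler characteristics realizable by resolving homology with permutation modules $\mathbb{Z}[G/P]$; identifying the two (via a finiteness-obstruction/$K_0$ argument and the induction theory that also yields the group-theoretic characterization of $n_G=1$) is precisely Oliver's theorem. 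You acknowledge this honestly, but as written the core implication is a plan that defers to the result being proved rather than a proof of it.
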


The following result justifies the notion of Oliver group.

\begin{theorem}\label{thm: fixed point free}{\rm (Oliver \cite{Oliver 1975})} 
Let \(G\) be a finite group. Then the following three conclusions are equivalent.
\begin{itemize}
\item[\rm 1)] \(G\) is not of prime power order and the Oliver number \(n_G = 1\).
\item[\rm 2)] There exists a finite contractible \(G\)-CW complex such that \(X^G = \varnothing\).
\item[\rm 3)] There exists a smooth action of \(G\) on a disk \(D\) such that \(D^G = \varnothing\).
\end{itemize}
\end{theorem}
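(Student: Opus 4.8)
The plan is to prove the three equivalences by routing everything through condition (2), establishing $(2)\Leftrightarrow(3)$, then $(2)\Rightarrow(1)$, and finally the substantive implication $(1)\Rightarrow(2)$.

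For $(2)\Leftrightarrow(3)$ I would use the equivariant thickening machinery together with equivariant triangulation. Given a finite contractible \(G\)-CW complex \(X\) with \(X^G=\varnothing\) as in (2), I apply Theorem~\ref{thm: thickening F empty} to a real \(G\)-vector bundle \(E\) over \(X\) (e.g. the zero bundle); this yields a smooth \(G\)-manifold \(M\) of the \(G\)-homotopy type of \(X\) with \(M^G=\varnothing\), and since \(X\) is finite and contractible, the Remark following Theorem~\ref{thm: thickening F empty} lets me take \(M\) to be a disk \(D\), giving (3). Conversely, a smooth \(G\)-action on a disk \(D\) with \(D^G=\varnothing\) produces, by the results of Illman or Matumoto--Shiota, a finite \(G\)-CW structure on the compact contractible space \(D\) whose \(G\)-fixed set is empty, which is exactly (2).

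For $(2)\Rightarrow(1)$ I first rule out prime power order: if \(G\) were a \(p\)-group, then the contractible complex \(X\) would be \(\mbb{F}_p\)-acyclic, so by Smith theory \(X^G\) would be \(\mbb{F}_p\)-acyclic and in particular non-empty, contradicting \(X^G=\varnothing\). Hence \(G\in\neg\frak{P}\) and the Oliver number \(n_G\) is defined. Since \(\chi(\varnothing)=0\), the complex \(X\) contributes \(\chi(X^G)-1=-1\) to the subgroup \(n_G\cdot\mbb{Z}\), forcing \(n_G\mid 1\) and therefore \(n_G=1\), which is (1).

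The hard direction is $(1)\Rightarrow(2)$: producing a finite contractible \(G\)-CW complex with \emph{genuinely empty} fixed set, not merely one with \(\chi(X^G)=0\). Here I would follow Oliver's obstruction-theoretic construction. Starting from the unit sphere \(S(V)\) of a real \(G\)-module \(V\) with \(V^G=0\) (so that \(S(V)^G=\varnothing\)), I attach \(G\)-cells of orbit type \(G/H\) with \(H\) proper---keeping the fixed set empty throughout---to kill the reduced homology dimension by dimension. The delicate point is that the inductive cell-attachment must simultaneously control homology across all subgroup strata, and the single surviving obstruction to achieving \(\mbb{Z}\)-acyclicity (after also trivializing \(\pi_1\)) is an integral Euler-characteristic invariant whose reduction lives in \(\mbb{Z}/n_G\mbb{Z}\); the defining property of \(n_G\) identifies this as the \emph{only} obstruction. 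When \(n_G=1\) this obstruction group is trivial, so the construction terminates in a finite contractible \(X\) with \(X^G=\varnothing\). I expect this inductive homology-killing argument, and the identification of \(n_G\) as the exact obstruction, to be the main difficulty; the remaining implications are formal once the thickening theorem and Smith theory are in hand.
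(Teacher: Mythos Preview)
The paper does not give a proof of this theorem: it is stated as a result of Oliver \cite{Oliver 1975} and cited without argument, serving as background for the paper's own results. There is therefore no in-paper proof to compare your proposal against.

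That said, your outline is broadly correct and close in spirit to Oliver's original argument. Your $(2)\Leftrightarrow(3)$ via the paper's equivariant thickening theorem and Illman's triangulation is a clean way to pass between the CW and smooth settings, and is exactly how the present paper would have you do it (the Remark after Theorem~\ref{thm: thickening F empty} is what lets you upgrade to a disk). Your $(2)\Rightarrow(1)$ is the standard argument: Smith theory excludes \(p\)-groups, and then \(\chi(\varnothing)-1=-1\in n_G\cdot\mbb{Z}\) forces \(n_G=1\). For $(1)\Rightarrow(2)$ you correctly identify this as the substantive construction from \cite{Oliver 1975}; your sketch (start from a fixed-point-free \(G\)-complex and inductively attach \(G\)-cells of proper orbit type to kill homology, with a single Euler-characteristic obstruction in \(\mbb{Z}/n_G\)) captures the shape of Oliver's argument, though the actual analysis of the obstruction---showing it really is governed precisely by \(n_G\)---is where the work lies and your description remains at the level of a plan rather than a proof.
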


The construction of \(G\)-CW complexes and real \(G\)-vector bundles described by Oliver \cite[the proof of Theorem 2.4]{Oliver 1996} 
yields the following theorem.

\begin{theorem}\label{thm: bundle extension}{\rm (Oliver \cite{Oliver 1996})}
Let \(G\) be a finite group not of prime power order, and let \(F\) be a smooth manifold. Then a real \(G\)-vector bundle \(E\) over \(F\) extends
to a real \(G\)-vector bundle over a finite $($resp., finite dimensional, countable$)$ contractible \(G\)-CW complex \(X\) such that \(X^G = F\), 
if and only if \(F\) is compact, \(\chi(F) \equiv 1 \pmod{n_G}\), and \(\msf{Oliv}(E) = 0\) $($resp., \(\msf{Oliv}(E) = 0\)$)$.
\end{theorem}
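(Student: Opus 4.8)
\medskip\noindent
\textbf{Proof proposal.}
The plan is to treat the two implications separately. Necessity is a short consequence of Smith theory together with Oliver's fixed-point theorem (Theorem~\ref{thm: Oliver F non-empty}), whereas sufficiency is the substance of the statement and amounts to a reformulation, in the language of the obstruction \(\msf{Oliv}(E)\) of Definition~\ref{def: Oliver obstruction}, of Oliver's construction in \cite[proof of Theorem~2.4]{Oliver 1996}.

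For necessity, suppose \(E\) extends to a real \(G\)-vector bundle \(\widehat E\) over a contractible \(G\)-CW complex \(X\) with \(X^G = F\). If \(X\) is finite, then \(F = X^G\) is a finite CW complex, so the manifold \(F\) is compact, and Theorem~\ref{thm: Oliver F non-empty} gives \(\chi(F) = \chi(X^G) \equiv 1 \pmod{n_G}\). In both the finite and the finite-dimensional countable case it remains to see that \(\msf{Oliv}(E) = 0\). For the first summand, restrict the \(G\)-action on \(\widehat E\) to the trivial subgroup \(I\): as \(X\) is contractible, \(\widetilde{KO}(X) = 0\), and naturality under the inclusion \(F \hookrightarrow X\) forces \([\msf{Res}^G_I E] = 0\) in \(\widetilde{KO}(F)\). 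For a \(p\)-subgroup \(P \leq G\) with \(p \mid |G|\), the bundle \(\msf{Res}^G_P \widehat E\) lives over the \(\mbb{F}_p\)-acyclic (even contractible) \(P\)-CW complex \(X\), so the transfer and Smith-theory argument underlying \cite[Theorem~0.1]{Oliver 1996} shows that its class becomes infinitely \(p\)-divisible after restriction along \(F = X^G \subseteq X^P\); hence \([\msf{Res}^G_P E]\) lies in \(\msf{div}_{(p)}^{\infty}(F)\), so the \(P\)-summand of \(\msf{Oliv}(E)\) vanishes as well.

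For sufficiency one builds the pair \((X,\widehat E)\) by a single inductive procedure that at once realizes the fixed-point data and extends the bundle, as in \cite[proof of Theorem~2.4]{Oliver 1996}. Starting from \(F\) together with \(E\), one attaches \(G\)-cells of the form \(G/P \times D^n\) for \(p\)-subgroups \(P < G\) — such cells never meet the fixed set, so \(X^G\) stays equal to \(F\) — choosing them so as to (i) kill the reduced homology of the growing complex, steering it toward contractibility, and (ii) extend the \(G\)-vector bundle, already defined on the subcomplex constructed so far, across each new cell. The obstruction to (ii) over a cell of orbit type \(G/P\) is controlled by the class of the current bundle in \(\widetilde{KO}_P(-)_{(p)}\) modulo \(\msf{div}_{(p)}^{\infty}\), and over the free part by its underlying class in \(\widetilde{KO}(-)\); the hypothesis \(\msf{Oliv}(E) = 0\) says precisely that \([\msf{Res}^G_I E] = 0\) in \(\widetilde{KO}(F)\) and that every \([\msf{Res}^G_P E]\) lies in \(\msf{div}_{(p)}^{\infty}(F)\), which — after stabilizing \(E\) by Whitney sums with product bundles \(V_{\times F}\) with \(V^G = 0\), and after inserting auxiliary cancelling pairs of \(G\)-cells — is exactly what lets these obstructions be removed. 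In the finite case the process terminates after finitely many \(G\)-cells precisely because \(\chi(F) \equiv 1 \pmod{n_G}\), the same arithmetic input behind Oliver's disk theorem; in the finite-dimensional countable case infinitely many cells are allowed, the compactness and Euler-characteristic conditions drop, and \(\msf{Oliv}(E) = 0\) is the sole surviving hypothesis.

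The main obstacle is the control of this simultaneous induction. The delicate point is that \(\msf{Oliv}(E)\) remembers the relevant \(K\)-theory classes only up to stabilization and only modulo infinitely \(p\)-divisible elements, so converting its vanishing into the genuine extendability of a \(G\)-vector bundle across each attached cell requires the careful bookkeeping of stabilizations, of cancelling cell pairs, and — in the finite case — of the precise number of cells, exactly as carried out by Oliver. Consequently the real work here is to verify that our formulation via Definition~\ref{def: Oliver obstruction} is equivalent to the hypotheses of \cite[Theorem~2.4]{Oliver 1996} and to record its finite-dimensional countable variant.
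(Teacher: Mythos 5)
Your outline is sound and matches what the paper actually does with this statement: the paper offers no proof at all, presenting the theorem as a direct consequence of Oliver's construction in the proof of his Theorem~2.4, which is exactly where you place the substance of the sufficiency direction. Your necessity argument (compactness and \(\chi(F)\equiv 1 \pmod{n_G}\) from Theorem~\ref{thm: Oliver F non-empty}, vanishing of the free summand of \(\msf{Oliv}(E)\) from \(\widetilde{KO}(X)=0\) and naturality, and infinite \(p\)-divisibility of the \(P\)-summands from the Smith-theoretic transfer argument) is a correct and welcome supplement, so nothing further is needed beyond the deferral to Oliver that the paper itself makes.
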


The results of \cite[Theorems 0.1, 0.2, and Lemmas 3.1, 3.2, 3.3]{Oliver 1996} allow to state the following theorem, 
where the manifold \(M\) can be chosen to be a disk (resp., Euclidean space) (cf., \cite[Thm. 2.4 and pp. 597–599]{Oliver 1996}).

\begin{theorem}\label{thm: fixed point set}{\rm (Oliver \cite{Oliver 1996})}
Let \(G\) be a finite group not of prime power order. Then a smooth manifold \(F\) is diffeomorphic to the fixed point set of a compact
$($resp., open$)$ contractible smooth \(G\)-manifold \(M\), if and only if \(F\) is compact, \(\chi(F) \equiv 1 \pmod{n_G}\), and 
\([TF] \in \widetilde{KO}(F,G)\), $($resp., the boundary \(\partial F = \varnothing\) and \([TF] \in \widetilde{KO}(F,G)\)$)$. 
\end{theorem}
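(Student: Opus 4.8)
The plan is to read the theorem off the equivariant thickening results of Section~\ref{sec: thickening}, combined with Oliver's bundle extension theorem (Theorem~\ref{thm: bundle extension}) and the dictionary, provided by Lemma~\ref{lem: Oliver obstruction} and Proposition~\ref{pro: Oliver obstruction}, between the vanishing of the Oliver obstruction and membership in \(\widetilde{KO}(F,G)\). I would first dispose of the case \(F = \varnothing\): in the compact case the stated conditions reduce to \(n_G = 1\), since \(\chi(\varnothing) = 0\), and Theorem~\ref{thm: fixed point free} then gives a smooth fixed point free action of \(G\) on a disk; in the open case there is nothing to check, and a smooth fixed point free action of \(G\) on some Euclidean space is obtained by thickening, via Theorem~\ref{thm: thickening F empty} and the Remark following it, a finite-dimensional contractible \(G\)-CW complex with empty fixed point set, which exists for every \(G \in \neg\frak{P}\) by Oliver \cite{Oliver 1975}. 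From now on assume \(F \neq \varnothing\).

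For necessity, let \(M\) be a compact (resp.\ open) contractible smooth \(G\)-manifold with \(F = M^G\). Then \(F\) is a smooth submanifold of \(M\) with \(F \cap \partial M = \partial F\), so \(F\) is compact if \(M\) is and \(\partial F = \varnothing\) if \(\partial M = \varnothing\). Being contractible, \(M\) is \(\mbb{Z}\)-acyclic, so Proposition~\ref{pro: Oliver obstruction} applies to \(E = TM|_F\) and yields \(E^G \approx TF\) and \(\msf{Oliv}(E) = 0\); hence \([TF] \in \widetilde{KO}(F,G)\) by Lemma~\ref{lem: Oliver obstruction}. In the compact case, \(M\) carries the structure of a finite contractible \(G\)-CW complex \(X\) with \(X^G = F\) (by \cite{Illman 1983} or \cite{Matumoto-Shiota 1986}), whence \(\chi(F) = \chi(X^G) \equiv 1 \pmod{n_G}\) by Theorem~\ref{thm: Oliver F non-empty}.

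For sufficiency, assume \(F\) is compact with \(\chi(F) \equiv 1 \pmod{n_G}\) and \([TF] \in \widetilde{KO}(F,G)\) (resp.\ \(\partial F = \varnothing\) and \([TF] \in \widetilde{KO}(F,G)\)). By Lemma~\ref{lem: Oliver obstruction} there is a real \(G\)-vector bundle \(E\) over \(F\) with \(E^G \approx TF\) and \(\msf{Oliv}(E) = 0\). By Theorem~\ref{thm: bundle extension}, \(E\) then extends to a real \(G\)-vector bundle \(\overline{E}\) over a finite (resp.\ finite-dimensional, countable) contractible \(G\)-CW complex \(X\) with \(X^G = F\): in the compact case the hypotheses that \(F\) is compact and \(\chi(F) \equiv 1 \pmod{n_G}\) are exactly what that theorem requires, while in the open case only \(\msf{Oliv}(E) = 0\) is needed. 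Since \((\overline{E}|_F)^G = E^G \approx TF = TF \oplus \mbb{R}^0_{\times F}\), Theorem~\ref{thm: thickening F non-empty} produces a smooth \(G\)-manifold \(M\) of the \(G\)-homotopy type of \(X\) with \(M^G\) diffeomorphic to \(F\), and since \(X\) is contractible so is \(M\). Finally, by the Remark following Theorem~\ref{thm: thickening F empty}, \(M\) may be chosen compact, indeed a disk, when \(X\) is finite, and open, indeed a Euclidean space, when \(X\) is infinite, the latter using \(\partial F = \varnothing\).

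The assembly itself is short, so the weight of the theorem lies in the cited inputs. The main obstacle is Theorem~\ref{thm: bundle extension}: Oliver's construction of a contractible \(G\)-CW complex realizing a prescribed fixed point set and carrying a prescribed real \(G\)-vector bundle, with the correct Euler characteristic in the finite case, together with the representation- and \(K\)-theoretic analysis of \cite[Lemmas 3.1, 3.2, and 3.3]{Oliver 1996} behind Lemma~\ref{lem: Oliver obstruction} that turns \(\msf{Oliv}(E) = 0\) into \([TF] \in \widetilde{KO}(F,G)\). Granting those, the only genuinely new step is the passage from a \(G\)-CW complex to a smooth \(G\)-manifold with the same fixed point set, which is precisely what the thickening Theorems~\ref{thm: thickening F non-empty} and~\ref{thm: thickening F empty} and the accompanying Remark provide, with the control of compactness and contractibility built in.
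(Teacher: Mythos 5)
Your proposal is correct and follows essentially the same route as the paper, which deduces Theorem~\ref{thm: fixed point set} from Lemma~\ref{lem: Oliver obstruction}, Proposition~\ref{pro: Oliver obstruction}, and Theorem~\ref{thm: bundle extension}, using Theorem~\ref{thm: thickening F non-empty} for sufficiency; your write-up merely supplies the details (the \(G\)-CW structure on \(M\) via \cite{Illman 1983} feeding into Theorem~\ref{thm: Oliver F non-empty}, and the \(F=\varnothing\) case) that the paper leaves implicit.
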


Theorem~\ref{thm: fixed point set} follows from Lemma~\ref{lem: Oliver obstruction}, Proposition~\ref{pro: Oliver obstruction}, 
and Theorem~\ref{thm: bundle extension}, as well as, by using Theorem~\ref{thm: thickening F non-empty} to prove the sufficiency conclusion.

\section{Proofs of Theorems~\ref{thm: Y acyclic} and \ref{thm: F stably parallelizable}}
\label{sec: proofs of 0.2 and 0.3}

In \cite[the end of Sec.~2]{Oliver 1996}, Oliver made a comment about his construction of \(G\)-vector bundles presented in
\cite[Thm.~2.4]{Oliver 1996}, which reads as follows.

\emph{Theorem~\msl{2.4} can easily be combined with equivariant thickening, 
to allow the construction of smooth \(G\)-manifolds with various properties. But since it seems quite difficult to formulate 
such a theorem in the greatest possible generality, we limit the applications to the case of actions on disks and Euclidean spaces.}

It is possible to follows the ideas of  \cite[Thm.~2.4]{Oliver 1996} to prove Theorem~\ref{thm: sufficiency in thm Y acyclic}
below, but in order to simplify the arguments in the proof, we decided to apply recent results of Cappell, Weinberger, and Yan 
(see \cite{Cappell-Weinberger-Yan I 2022} and \cite{Cappell-Weinberger-Yan II 2022}). 
Here, we state only the result obtained in \cite[Thm.~2]{Cappell-Weinberger-Yan II 2022}, where \(G\) is a finite group not of prime power order, 
and \(Y\) is a \(G\)-template (see Sec.~0 for the notion of \(G\)-template). For \(Y = \msl{pt}\), the result below goes back to Oliver \cite{Oliver 1975}.


\begin{theorem}\label{thm: CWY 2020 F non-empty}
{\rm (Cappell-Weinberger-Yan \cite{Cappell-Weinberger-Yan II 2022})}  
Let \(G\) be a finite group not of prime power order, and let \(Y\) be a \(G\)-template. 
Then a finite CW complex \(F\) is homeomorphic to the fixed point set 
\(X^G\)\! of a finite \(G\)-CW complex \(X \simeq_{/G} Y\), if and only if
\[
\chi(F) \equiv \chi(Y^G) \pmod{n_G}.
\]
In particular, there exists a finite \(G\)-CW complex \(X \simeq_{/G} Y\) such that \(X^G = \msl{pt}\),
if and only if \(\chi(Y^G) \equiv 1 \pmod{n_G}\).
\end{theorem}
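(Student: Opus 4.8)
The plan is to treat the two implications separately: \emph{necessity} by a short mapping-cone reduction to Oliver's Theorem~\ref{thm: Oliver F non-empty}, and \emph{sufficiency} by an equivariant ``plug'' construction near the fixed set, governed by the Oliver number $n_{G}$. For necessity, let $f\colon X\to Y$ be a pseudo-equivalence of finite $G$-CW complexes with $X^{G}=F$, and put $F_{0}=Y^{G}$. Form the $G$-mapping cylinder $M_{f}$. Since $Y\hookrightarrow M_{f}$ is a $G$-deformation retract and $f$ is a homotopy equivalence, the subcomplex inclusion $X\hookrightarrow M_{f}$ is a (non-equivariant) homotopy equivalence; being a trivial cofibration it exhibits $X$ as a strong deformation retract of $M_{f}$, so the mapping cone $Z:=M_{f}\cup_{X}CX$ is a finite contractible $G$-CW complex. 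As taking $G$-fixed points commutes with this subcomplex pushout and with the cylinder and cone, one gets $Z^{G}=M_{f^{G}}\cup_{F}CF$ with $f^{G}\colon F\to F_{0}$, whence $\chi(Z^{G})=\chi(F_{0})+1-\chi(F)$. Since $Z$ is finite and contractible with $Z^{G}\supseteq F_{0}\neq\varnothing$, Theorem~\ref{thm: Oliver F non-empty} forces $\chi(Z^{G})\equiv 1\pmod{n_{G}}$, that is, $\chi(F)\equiv\chi(F_{0})=\chi(Y^{G})\pmod{n_{G}}$.

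For sufficiency, given a finite CW complex $F$ with $\chi(F)\equiv\chi(Y^{G})\pmod{n_{G}}$, I would alter $Y$ only in a fixed-point-free region. Using that $Y$ is a $G$-template, choose a $G$-invariant regular neighborhood $N$ of $F_{0}=Y^{G}$ in $Y$, so that $N^{G}=F_{0}$, the frontier $\partial N$ is fixed-point-free, $N$ $G$-deformation retracts onto $F_{0}$, and $Y=(Y\setminus\mathrm{int}\,N)\cup_{\partial N}N$. It then suffices to build a finite $G$-CW complex $P$ with frontier $\partial N$, with $P^{G}=F$ on the nose, and admitting a $G$-map $P\to N$ which is the identity on $\partial N$ and a (non-equivariant) homotopy equivalence; for then $X:=(Y\setminus\mathrm{int}\,N)\cup_{\partial N}P$ has $X^{G}=P^{G}=F$ and, by the gluing lemma, carries a pseudo-equivalence onto $Y$. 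The existence of such a plug $P$ is exactly what Oliver's construction of equivariant resolutions supplies, in the relative form underlying Theorem~\ref{thm: bundle extension} (cf.\ the proof of \cite[Thm.~2.4]{Oliver 1996}): attaching cells of isotropy $\neq G$ to $N$ realizes any homotopically trivial modification of the fixed set, and the resulting change in the Euler characteristic of the fixed set ranges exactly over $n_{G}\mathbb{Z}$---which is precisely why the hypothesis $\chi(F)-\chi(Y^{G})\in n_{G}\mathbb{Z}$ is what is needed. In the cited paper this last step is instead routed through the equivariant surgery theorems of Cappell-Weinberger-Yan \cite{Cappell-Weinberger-Yan I 2022} and \cite{Cappell-Weinberger-Yan II 2022}.

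The necessity argument is routine; the main obstacle is the sufficiency plug $P$---prescribing at once its $G$-fixed set $F$, its fixed-point-free frontier $\partial N$, and its homotopy type rel $\partial N$. This is where one genuinely uses Oliver's identification of $n_{G}$ with the subgroup $\{\chi(X^{G})-1\}\subseteq\mathbb{Z}$ taken over all finite contractible $G$-CW complexes $X$ (Oliver \cite{Oliver 1975}), or the Cappell-Weinberger-Yan machinery; it is also where the connectedness of $Y$ and of $Y^{G}$ enters, letting the whole modification be localized near a single $G$-fixed vertex. The displayed special case ``$X^{G}=\msl{pt}$'' is just $F=\msl{pt}$, where $\chi(F)=1$.
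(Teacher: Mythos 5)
The paper does not actually prove this statement: it is imported verbatim as \cite[Thm.~2]{Cappell-Weinberger-Yan II 2022}, so there is no in-paper argument to compare yours against, and I can only judge your sketch on its own merits. Your necessity argument is correct and complete. After a cellular approximation, the mapping cone \(Z=M_f\cup_X CX\) of a pseudo-equivalence \(f\colon X\to Y\) of finite \(G\)-CW complexes is a finite contractible \(G\)-CW complex; fixed points commute with the cylinder and cone constructions, so \(\chi(Z^G)=\chi(Y^G)+1-\chi(F)\); and since \(Z^G\) contains the cone point, Theorem~\ref{thm: Oliver F non-empty} gives \(\chi(Z^G)\equiv 1\pmod{n_G}\), i.e.\ \(\chi(F)\equiv\chi(Y^G)\pmod{n_G}\). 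That is a clean, self-contained reduction to Oliver's 1975 theorem.

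The sufficiency half has a genuine gap at exactly the point you flag: the plug \(P\). You assert that a finite \(G\)-CW complex \(P\supseteq\partial N\) with \(P^G=F\), together with a homotopy equivalence \(P\to N\) that is the identity on the fixed-point-free frontier \(\partial N\), is ``exactly what Oliver's construction supplies, in the relative form underlying Theorem~\ref{thm: bundle extension}.'' It is not. Oliver's construction in \cite[Thm.~2.4]{Oliver 1996} produces \emph{contractible} complexes with prescribed fixed set (and \(G\)-vector bundles over them); it is not a relative statement producing a complex of the homotopy type of an arbitrary connected \(N\simeq Y^G\) with simultaneously prescribed fixed set and prescribed frontier. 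Controlling the homotopy type --- in particular the fundamental group --- of such a modification over a non-simply-connected, non-acyclic target is precisely the new content of the Cappell--Weinberger--Yan papers, so at its one hard step your sketch in effect re-cites the theorem being proved. Note also that the cheap alternative (wedging \(Y\) at a fixed vertex with a finite contractible \(G\)-CW complex \(X_1\)) only realizes fixed sets of the form \(Y^G\vee X_1^G\), so it cannot substitute for the plug; and for a bare \(G\)-CW complex \(Y\) the existence of your neighborhood \(N\) with \(N^G=Y^G\) and fixed-point-free frontier requires passing to a \(G\)-simplicial structure and a derived neighborhood, which you should say. In summary: necessity stands as a complete proof; sufficiency is an outline whose central construction is missing and is genuinely beyond the tools quoted in this paper.
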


Now, we show a result which allows us to prove the sufficiency conlusions in Theorems~\ref{thm: Y acyclic} and \ref{thm: F stably parallelizable}.

\begin{theorem}\label{thm: sufficiency in thm Y acyclic}
Let \(G\) be a finite group not of prime power order, and let \(Y\) be a \(G\)-template such that \(\chi(Y^G) \equiv 1 \pmod{n_G}\).
Then any smooth manifold \(F\) with \([T F ] \in \widetilde{KO}(F,G)\)  is diffeomorphic to the fixed point set \(M^G\)
of a smooth \(G\)-manifold \(M \simeq_{/G} Y\). Moreover, if \(F\) is compact and \(\chi(F ) \equiv 1 \pmod{n_G}\), 
then \(M\) can be chosen to be compact.
\end{theorem}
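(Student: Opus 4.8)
The plan is to combine the equivariant thickening machinery of Section~\ref{sec: thickening} with the homotopy-theoretic fixed-point realization of Cappell--Weinberger--Yan (Theorem~\ref{thm: CWY 2020 F non-empty}) and the $G$-vector bundle input coming from Oliver's obstruction theory (Lemma~\ref{lem: Oliver obstruction}, Theorem~\ref{thm: bundle extension}). The point is that Theorem~\ref{thm: CWY 2020 F non-empty} produces a $G$-CW complex with the right fixed set but gives no control over a $G$-vector bundle over it; whereas Theorem~\ref{thm: thickening F non-empty} needs such a bundle restricting correctly over $F$. So the heart of the argument is to glue together the complex realizing $F$ as a fixed set with a contractible complex carrying a good bundle, and then thicken.

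First I would invoke Lemma~\ref{lem: Oliver obstruction}: since $[TF]\in\widetilde{KO}(F,G)$, there is a real $G$-vector bundle $E_0$ over $F$ with $E_0^G\approx TF$ and $\msf{Oliv}(E_0)=0$. By Theorem~\ref{thm: bundle extension}, $E_0$ extends to a real $G$-vector bundle $\widehat E$ over a finite dimensional countable contractible $G$-CW complex $X_0$ with $X_0^G=F$ (and, when $F$ is compact with $\chi(F)\equiv 1\pmod{n_G}$, one may take $X_0$ finite). Next, apply Theorem~\ref{thm: CWY 2020 F non-empty}: since $\chi(Y^G)\equiv 1\pmod{n_G}$, there is a finite $G$-CW complex $Z$ with $Z\simeq_{/G}Y$ and $Z^G=\msl{pt}$. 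The plan is now to form a $G$-CW complex $X$ by attaching $X_0$ to $Z$ along the single fixed vertex $\msl{pt}\subset Z^G\subset F=X_0^G$ — i.e. a wedge-type construction $X=Z\cup_{\msl{pt}}X_0$. Because $X_0$ is contractible, $X\simeq Z\simeq_{/G}Y$ (collapsing the contractible subcomplex $X_0$ is a homotopy equivalence, and it is compatible with the pseudo-equivalence to $Y$), while on fixed sets $X^G=Z^G\cup_{\msl{pt}}X_0^G=\{\msl{pt}\}\cup_{\msl{pt}}F=F$. Extend $\widehat E$ over $X$ by taking any $G$-vector bundle over $Z$ agreeing with the fiber of $\widehat E$ at $\msl{pt}$ (e.g. the product bundle on that fiber, after a stabilization if needed); call the result $E$ over $X$. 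Then $(E|_F)^G\approx(E_0)^G\approx TF$, so $(E|_F)^G\approx TF\oplus\mbb{R}^0_{\times F}$.

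Finally I would feed $(X,E,F)$ into Theorem~\ref{thm: thickening F non-empty} with $n=0$: this yields a smooth $G$-manifold $M$ of the $G$-homotopy type of $X$, hence with $M\simeq_{/G}Y$, and with $M^G$ diffeomorphic to $F$. If $F$ is compact and $\chi(F)\equiv 1\pmod{n_G}$, then $X_0$ (hence $X$) is finite, so by the Remark after Theorem~\ref{thm: thickening F empty} the thickening $M$ can be chosen compact; this gives the "moreover" clause.

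The step I expect to be the main obstacle is the gluing: one must check that attaching the (possibly non-compact, finite dimensional) contractible $G$-CW complex $X_0$ carrying the bundle to the finite complex $Z$ along a single fixed $0$-cell genuinely produces $X^G=F$ with no extra fixed points and does not disturb the pseudo-equivalence $X\simeq_{/G}Y$ — in particular, that the homotopy equivalence $X\to Z$ collapsing $X_0$ can be composed with the given pseudo-equivalence $Z\to Y$ to keep $X\simeq_{/G}Y$, and that the $G$-vector bundle $\widehat E$ over $X_0$ and a chosen bundle over $Z$ can be matched over the common fixed vertex (which may require stabilizing $\widehat E$ by a trivial summand and correspondingly enlarging $n$, then absorbing the change using the freedom of $n\geq 0$ in Theorem~\ref{thm: thickening F non-empty}). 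One also needs $Y$ to have a genuine fixed point so that the wedge point $\msl{pt}$ really sits in $Y^G$; this is guaranteed by the $G$-template hypothesis. Everything else is a direct citation of the theorems assembled above.
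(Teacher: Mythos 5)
Your proposal is correct and follows essentially the same route as the paper: Oliver's Lemma~\ref{lem: Oliver obstruction} and Theorem~\ref{thm: bundle extension} to get a bundle over a contractible complex $X_0$ with $X_0^G=F$, the Cappell--Weinberger--Yan realization of a one-fixed-point complex $X_1\simeq_{/G}Y$, a wedge $X=X_0\vee X_1$ at a fixed point, and then Theorem~\ref{thm: thickening F non-empty}. The gluing issue you flag as the main obstacle is handled in the paper exactly as you suggest: the bundle is extended over $X$ as the pullback $f^{\ast}E_0$ along the collapse map $f\colon X\to X_0$ (identity on $X_0$, constant on $X_1$), whose restriction to $X_1$ is the product bundle with fiber the $\mathbb{R}G$-module $(E_0)_{x_0}$, so no stabilization or matching argument is needed.
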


\begin{proof}
As \([T F ] \in \widetilde{KO}(F,G)\),  Lemma~\ref{lem: Oliver obstruction} shows that 
there exists a real \(G\)-vector bundle \(E\) over \(F\) such that \(E^G \approx T F\) and \(\msf{Oliv}(E) = 0\). Hence, by Theorem~\ref{thm: bundle extension},
\(E\) extends to a real \(G\)-vector bundle \(E_0\) over \(X_0\), a finite dimensional, countable, contractible \(G\)-CW complex such that \(X_0^G = F\). Clearly,
\[
(E_0|_F)^G \approx E^G \approx TF.
\]
If \(F\) is compact and \(\chi(F) \equiv 1 \pmod{n_G}\), Theorem~\ref{thm: bundle extension} allows to assume that \(X_0\) is finite.
As \(\chi(Y^G) \equiv 1 \pmod{n_G}\) by the assumption, Theorem 4.1 shows that there exists a finite \(G\)-CW complex \(X_1 \simeq_{/G} Y\) 
such that \(X_1^G = \{x_1\}\). Choose a point \(x_0 \in X_0^G = F\) and form the wedge \(X = X_0 \vee X_1\) by identifying the two points \(x_0\) 
and \(x_1\). Then \(X^G = F\) and 
\[
X = X_0 \vee X_1 \simeq_{/G} (X_0 \vee X_1) / X_0 = X_1 \simeq_{/G} Y.
\]

Let \(f \colon X \to X_0\) be the \(G\)-map such that  \(f|_{X_0} = \msf{id}_{X_0}\)  and \(f(X_1) = \{x_0\}\).  
Let \(f^{\ast} E_0\) be the real \(G\)-vector bundle over \(X\) induced by \(f\). Then 
\[
(f^{\ast}E_0|_F)^G \approx (E_0|_F)^G \approx TF.
\]
Therefore, the \(G\)-vector bundle \(f^{\ast} E_0\) over \(X\) allows us to apply Theorem~\ref{thm: thickening F non-empty} to convert \(X\)
into a smooth \(G\)-manifold \(M \simeq_{/G} Y\) such that \(M^G = F\), which can be chosen to be compact whenever \(F\)  is compact
and \(\chi(F) \equiv 1 \pmod{n_G}\), completing  the proof of Theorem~\ref{thm: sufficiency in thm Y acyclic}.
\end{proof}

Now, we are ready to prove Theorem~\ref{thm: Y acyclic}.

\begin{proof}
In Theorem~\ref{thm: Y acyclic}, the necessity of the condition that \([TF] \in \widetilde{KO}(F, G)\) follows from Lemma~\ref{lem: Oliver obstruction} and 
Proposition~\ref{pro: Oliver obstruction}. In the case where \(F\) is compact, the necessity of the condition that \(\chi(F) \equiv 1 \pmod{n_G}\)
 is a consequence of  Theorem~\ref{thm: CWY 2020 F non-empty}, because in Theorem~0.1, \(Y\) is \(\mbb{Z}\)-acyclic and thus, \(\chi(Y) = 1\).

In Theorem~\ref{thm: Y acyclic}, the sufficiency of the conditions imposed on \(F\) follows from Theorem~\ref{thm: sufficiency in thm Y acyclic},
where the assumption that \(Y\) is \(\mbb{Z}\)-acyclic is replaced by the more general condition that \(\chi(Y) \equiv 1 \pmod{n_G}\), 
proving Theorem~\ref{thm: Y acyclic}.
\end{proof}

Next, we wish to prove Theorem~\ref{thm: F stably parallelizable}.

\begin{proof}
In Theorem~\ref{thm: F stably parallelizable}, the necessity of the condition that 
\[
(\ast) \quad \quad \chi(F) \equiv \chi(Y^G) \pmod{n_G}
\]
follows immediately from Theorem~\ref{thm: CWY 2020 F non-empty}.

In order to prove the sufficiency, assume that \((\ast)\) holds for a finite group \(G\) not of prime power order,
a compact stably parallelizable manifold \(F\), and  a \(G\)-template \(Y\).
Then, by Theorem~\ref{thm: CWY 2020 F non-empty}, there exists a finite \(G\)-CW complex \(X \simeq_{/G} Y\) such that \(X^G = F\).  
As \(F\) is stably parallelizable,
\[
TF \oplus \mbb{R}^n_{\times F} \approx \mbb{R}^{m+n}_{\times F}
\]
for \(m = \dim F\) and some integer \(n \geq 0\). Set \(E = \mbb{R}^{m+n}_{\times X}\).  Then
\[
(E|_F)^G \approx TF \oplus \mbb{R}^n_{\times F}
\]
and thus, using the \(G\)-vector bundle \(E\) over \(X\), we can apply Theorem~\ref{thm: thickening F non-empty} to convert \(X\) into 
a compact smooth \(G\)-manifold \(M \simeq_{/G} Y\) such that \(M^G = F\), completing the proof of Theorem~\ref{thm: F stably parallelizable}.
\end{proof}

\section{Proofs of Theorem~\ref{thm: F empty}  and \ref{thm: F empty in RP2n}}
\label{sec: proofs of 0.4 and 0.5}

First, we recall the result of Cappell, Weinberger, and Yan \cite[Thm.~2 ]{Cappell-Weinberger-Yan II 2022}, 
in the case where \(F = \varnothing\) (cf., \cite[Thm.~6 and Sec.~4]{Cappell-Weinberger-Yan II 2022}).

\begin{theorem}\label{thm: CWY 2020 F empty}
{\rm (Cappell-Weinberger-Yan \cite{Cappell-Weinberger-Yan II 2022})}   
Let \(G\) be a finite group not of prime power order, and let \(Y\) be a \(G\)-template. Then there exists a finite \(G\)-CW complex \(X \simeq_{/G} Y\) 
such that \(X^G = \varnothing\), if and only if \(\chi(Y^G) \equiv 0 \pmod{n_G}\).
\end{theorem}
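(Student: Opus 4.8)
The plan is to prove the two implications separately. This statement is the analogue, for the empty fixed point set, of the realization theorem Theorem~\ref{thm: CWY 2020 F non-empty}; it is \cite[Thm.~6]{Cappell-Weinberger-Yan II 2022}. The necessity of the congruence I would establish directly and elementarily from Oliver's work, while for the sufficiency (the substantial half) I would invoke the equivariant surgery results of Cappell, Weinberger, and Yan \cite[Thm.~6 and Sec.~4]{Cappell-Weinberger-Yan II 2022}.

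For necessity, I would start from a pseudo-equivalence \(f\colon X\to Y\) with \(X\) a finite \(G\)-CW complex and \(X^G=\varnothing\), and, after an equivariant cellular approximation, assume \(f\) is \(G\)-cellular. Then I would form the mapping cone \(Z=C_f=Y\cup_f CX\), a finite \(G\)-CW complex, where \(CX=(X\times[0,1])/(X\times\{1\})\) is the cone on \(X\) with vertex \(v\), glued to \(Y\) by identifying \(X\times\{0\}\) with its image under \(f\). Since \(f\) is a homotopy equivalence, \(Z\) is contractible. No point of \(Z\setminus Y\) other than \(v\) is \(G\)-fixed: such a point is a class \([(x,t)]\) with \(0<t<1\), and it is \(G\)-fixed only if \(x\in X^G=\varnothing\). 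Hence \(Z^G=Y^G\sqcup\{v\}\) and \(\chi(Z^G)=\chi(Y^G)+1\). As \(Z^G\neq\varnothing\) is the fixed point set of the finite contractible \(G\)-CW complex \(Z\), Theorem~\ref{thm: Oliver F non-empty} gives \(\chi(Z^G)\equiv 1\pmod{n_G}\), i.e.\ \(\chi(Y^G)\equiv 0\pmod{n_G}\), as required.

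For sufficiency, assume \(\chi(Y^G)\equiv 0\pmod{n_G}\). Since \(\chi(S^1)=0\equiv\chi(Y^G)\), Theorem~\ref{thm: CWY 2020 F non-empty} yields a finite \(G\)-CW complex \(X_1\simeq_{/G}Y\) with \(X_1^G\cong S^1\). One concrete route from here is to arrange that this fixed circle has a \(G\)-invariant neighbourhood \(N\), taken \(G\)-homeomorphic to \(S^1\times D(V)\) for a prescribed \(\mathbb{R}G\)-module \(V\) with \(V^G=0\), so that its frontier \(S^1\times S(V)\) carries a fixed-point-free \(G\)-action; then excise \(\mathrm{int}\,N\) and reglue, along \(S^1\times S(V)\), a finite \(G\)-CW complex \(N_1\) with \(N_1^G=\varnothing\), obtaining a finite \(G\)-CW complex \(X\) with \(X^G=\varnothing\). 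The whole point is to carry out this trade so that \(X\simeq_{/G}Y\) still holds.

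The hard part is precisely this last trade, and it is where I would lean on \cite{Cappell-Weinberger-Yan II 2022}. A naive filling of the free boundary \(S^1\times S(V)\), say by \(S(V)\times D^2\) when \(\dim V\) is small, or by a collar, changes \(\pi_1\) or some higher homotopy group of the total space and so breaks the pseudo-equivalence to \(Y\); consequently \(N_1\) has to be built together with further equivariant modifications of \(X_1\) away from its fixed set. By the obstruction-theoretic analysis of \cite{Cappell-Weinberger-Yan II 2022}, whether this can be done is governed by the residue of \(\chi(X_1^G)\) modulo \(n_G\); since \(S^1\) and \(\varnothing\) have the same Euler characteristic and \(\chi(Y^G)\equiv 0\pmod{n_G}\) by assumption, the obstruction vanishes. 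Carrying this out in general is the content of \cite[Thm.~6 and Sec.~4]{Cappell-Weinberger-Yan II 2022}, which I would cite for the proof.
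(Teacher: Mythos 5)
Your proposal matches the paper in the only place where the paper actually does anything: the paper offers no proof of this statement, recording it as a theorem of Cappell--Weinberger--Yan and delegating both implications to \cite{Cappell-Weinberger-Yan II 2022}. For the sufficiency direction you ultimately do the same; your intermediate reduction to a fixed circle via Theorem~\ref{thm: CWY 2020 F non-empty} and the excision-and-regluing sketch are reasonable motivation, but as you acknowledge, preserving the pseudo-equivalence through that trade is precisely the content of \cite[Thm.~6 and Sec.~4]{Cappell-Weinberger-Yan II 2022}, so the citation carries all the weight there, exactly as in the paper. Where you genuinely diverge, and add value, is the necessity direction: your mapping-cone argument is correct and elementary. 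Given a cellular pseudo-equivalence \(f\colon X\to Y\) with \(X^G=\varnothing\), the cone \(Z=Y\cup_f CX\) is a finite contractible \(G\)-CW complex with \(Z^G=Y^G\sqcup\{v\}\), whence \(\chi(Z^G)=\chi(Y^G)+1\equiv 1\pmod{n_G}\) by Theorem~\ref{thm: Oliver F non-empty} (or directly from the definition of \(n_G\)), so \(\chi(Y^G)\equiv 0\pmod{n_G}\). This derives the easy half of the cited theorem from Oliver's original results rather than from \cite{Cappell-Weinberger-Yan II 2022}, a small but genuine gain in self-containedness over the paper's pure citation.
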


Now, we are ready to prove Theorem~\ref{thm: F empty}.

\begin{proof}
In Theorem~\ref{thm: F empty},  the necessity of the condition that 
\[
(\ast) \quad \quad \chi(Y^G) \equiv 0 \pmod{n_G}
\]
follows from Theorem~\ref{thm: CWY 2020 F empty}. In order to prove the sufficiency, note that once \((\ast)\) holds, 
it follows from Theorem~\ref{thm: CWY 2020 F empty} that there exists a finite \(G\)-CW complex \(X \simeq_{/G} Y\) such that \(X^G = \varnothing\). 
Thus, using any real \(G\)-vector bundle \(E\) over \(X\), we can apply Theorem~\ref{thm: thickening F empty} to convert \(X\) into a compact smooth \(G\)-manifold 
\(M \simeq_{/G} Y\) such that \(M^G = \varnothing\), completing the proof of Theorem~\ref{thm: F empty}.
\end{proof}

By the work of  Laitinen and Morimoto \cite{Laitinen-Morimoto 1998}, a finite group \(G\) has a smooth action on a sphere with exactly one fixed point,
 if and only if \(G\) is an Oliver group. The necessity of the conclusion that \(G\) is an Oliver group follows from the Slice Theorem and 
Theorem~\ref{thm: fixed point free}, while the sufficiency of the conclusion is the main result of \cite{Laitinen-Morimoto 1998} and it reads as follows.

\begin{theorem}\label{thm: one fixed point}
Each finite Oliver group \(G\) has a smooth action on a sphere \(S^{2n}\) with exactly one fixed point \(x_0\) at which the tangent 
\(\mbb{R}G\)-modul \(T_{x_0} (S^{2n})\) is without large isotropy subgroups outside of the origin.
\end{theorem}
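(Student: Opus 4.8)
The plan is to realize \(G\) acting on a disk with a single fixed point and then cap that disk off into a sphere without creating any new fixed point; throughout, the Oliver hypothesis enters only through the equivalent condition \(n_G = 1\). To produce the disk, I take \(F = \{pt\}\): it is compact, \(\chi(F) = 1 \equiv 1 \pmod{n_G}\) (automatic since \(n_G = 1\)), and \([TF] = 0 \in \widetilde{KO}(F,G)\) because \(\widetilde{KO}(\{pt\}) = 0\). Theorem~\ref{thm: fixed point set} then furnishes a compact contractible smooth \(G\)-manifold \(D\) — a disk — with \(D^G = \{x_0\}\) a single point. Writing \(V = T_{x_0}D\), an \(\mbb{R}G\)-module with \(V^G = 0\), the boundary \(S = \partial D\) is a fixed-point-free smooth \(G\)-homotopy sphere.

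Next I would secure the gap condition. Choosing an \(\mbb{R}G\)-module \(W\) with \(W^G = 0\) all of whose nonzero isotropy subgroups are small, and replacing \(D\) by a smoothing of the product \(D \times D(W)\), I may assume that \(\dim D = 2n\) is even, that the single fixed point persists (since \((D \times D(W))^G = \{x_0\} \times \{0\}\)), and that the tangent module \(V = T_{x_0}D\) has no large isotropy subgroups outside the origin. This is precisely the dimensional gap hypothesis under which equivariant surgery can be carried out below the middle dimension in the region where \(G\) acts without large isotropy.

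The decisive step is to attach to \(D\) a compact, contractible, \emph{fixed-point-free} smooth \(G\)-manifold \(D'\) with \(\partial D' \cong_G S\), so that \(\Sigma := D \cup_S D'\) is a homotopy sphere. Since \(D\) and \(D'\) are contractible and \(S\) is a simply connected homotopy sphere, Mayer--Vietoris gives \(\widetilde H_k(\Sigma) \cong \widetilde H_{k-1}(S)\), so \(\Sigma\) has the homology of \(S^{2n}\), and van Kampen gives \(\pi_1(\Sigma) = 1\); hence \(\Sigma\) is a homotopy \(2n\)-sphere. By construction \(\Sigma^G = D^G \cup (D')^G = \{x_0\} \cup \varnothing = \{x_0\}\), so the action has exactly one fixed point. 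I would then standardize \(\Sigma\) to the genuine sphere \(S^{2n}\) by equivariant surgery performed in its free part, away from \(x_0\), which leaves \(T_{x_0}\Sigma = V\) and hence the no-large-isotropy property intact.

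The main obstacle is the construction of the fixed-point-free contractible cap \(D'\) realizing the prescribed boundary \(S\), together with the vanishing of the attendant equivariant surgery obstructions. Both hypotheses are indispensable exactly here: the Oliver condition \(n_G = 1\), through Theorem~\ref{thm: fixed point free}, is what permits contractible fixed-point-free \(G\)-manifolds to exist at all (so that a fixed-point-free filling of \(S\) can even be sought), while the gap condition secured in the stabilization step is what lets the equivariant surgeries — both the filling and the final standardization — proceed without disturbing the unique fixed point \(x_0\).
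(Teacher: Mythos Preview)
The paper does not prove this theorem at all: it is quoted verbatim as the main result of Laitinen--Morimoto \cite{Laitinen-Morimoto 1998}, and the paper uses it as a black box in the proof of Theorem~\ref{thm: F empty in RP2n}. So there is no ``paper's own proof'' to compare against; your outline is an attempt to reconstruct the Laitinen--Morimoto argument.

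Your sketch has the right architecture but two genuine gaps, and you yourself flag the first one. The crucial step is producing the fixed-point-free contractible cap \(D'\) with \(\partial D' \cong_G S\) for the \emph{specific} boundary \(S = \partial D\). Oliver's Theorem~\ref{thm: fixed point free} gives fixed-point-free actions on \emph{some} disk, not on a disk with a prescribed \(G\)-boundary; passing from ``some'' to ``this particular \(S\)'' is exactly where the equivariant surgery obstruction theory of Laitinen--Morimoto enters, and it is not a formality. In their paper the obstruction lives in a quotient of the Burnside ring and its vanishing for Oliver groups is the technical heart of the result --- none of which your outline supplies. Second, your final ``standardization'' of the homotopy sphere \(\Sigma\) to the genuine \(S^{2n}\) by surgery in the free part is unjustified: equivariant homotopy spheres are not in general \(G\)-diffeomorphic to linear spheres, and surgery confined to the free part does not by itself kill the difference (nor, nonequivariantly, does it rule out exotic smooth structures). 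Laitinen--Morimoto avoid this issue by working from the outset with a \(G\)-framed normal map to a standard linear sphere and performing equivariant surgery on that map; they never need to ``standardize'' after the fact. So while your plan is a reasonable heuristic picture, it stops precisely at the two points where the real work of \cite{Laitinen-Morimoto 1998} lies.
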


We recall that for a finite group \(G\), a subgroup \(H\) of \(G\) is called \emph{large} in \(G\) if there exists a prime \(p\) 
dividing the order of \(G\), such that
\[
G^p \leq H \leq G,
\]
where \(G^p\) is the co-Sylow \(p\)-subgroup of \(G\), i.e., the smallest normal subgroup \(G^p\) of \(G\) such that the quotient group 
\(G/G^p\) is of \(p\)-power order.

Now, we are able to prove Theorem~\ref{thm: F empty in RP2n} which asserts that each finite Oliver group \(G\) has a smooth fixed point free action 
on some even dimensional real projective space.

\begin{proof}
Let \(G\) be a finite Oliver group. By Theorem~\ref{thm: one fixed point}, there exists a smooth action of \(G\) on some sphere \(S^{2n}\)\! 
with exactly one fixed point, say \(x_0\), at which the tangent \(\mbb{R}G\)-module \(T_{x_0}(S^{2n})\) is without large isotropy subgroups 
outside of the origin. Set \(V = T_{x_0}(S^{2n})\) and let \(DV\) be the \(G\)-invariant unit disk of \(V\). Consider the real projective space 
\(\mbb{R}{\rm P}^{2n}\)\! as the quotient space
\[
\mbb{R}{\rm P}^{2n} = DV/_{x \sim -x}
\]
obtained from \(DV\) by identifying the antipodal points \(x\) and \(-x\) for all \(x \in SV\), the \(G\)-invariant unit sphere of \(V\).
The construction yields a smooth action of \(G\) on \(\mbb{R}{\rm P}^{2n}\)\!. In other words, \(\mbb{R}{\rm P}^{2n}\)\! with the action 
of \(G\) is the projectivization \(P(V \oplus \mbb{R})\) of the \(\mbb{R}G\)-module \(V\). Clearly, \(V^G = \{0\}\).

For the origin \(0\) in \(DV\), the class \([0]\) in \(\mbb{R}{\rm P}^{2n}\)\! is a fixed point of the action of \(G\) on \(\mbb{R}{\rm P}^{2n}\)\!, 
and for any \(x \in SV\), \(G(x) \neq \{x, -x\}\) as otherwise, \(G_x\) would be a subgroup of \(G\) of index \(2\) and thus, \(G_x\) would be a large 
subgroup of \(G\). So, for \(x \in S(V)\), the class \([x]\) in \(\mbb{R}{\rm P}^{2n}\)\! is not a fixed point of the action.
Hence, \([0]\) is the single fixed point of the action of \(G\) on \(\mbb{R}{\rm P}^{2n}\)\!.

The tangent \(\mbb{R}G\)-modules \(T_{x_0}(S^{2n})\) and \(T_{[0]}(\mbb{R}{\rm P}^{2n})\) are isomorphic and thus,
the Slice Theorem allows us to form the \(G\)-equivariant connected sum
\[
S^{2n} \# \ P(V \oplus \mbb{R})  \cong \ \mbb{R}{\rm P}^{2n}\!,
\]
by removing \(G\)-invariant open disks around \(x_0 \in S^{2n}\)\! and \([0] \in  P(V \oplus \mbb{R})\)  and
glueing together their boudaries. As the construction deletes the single fixed points of the actions of \(G\) on \(S^{2n}\)\! and  \(P(V \oplus \mbb{R})\), 
we obtain a smooth fixed point free action of \(G\) on \(\mbb{R}{\rm P}^{2n}\)\!, completing the proof of Theorem~\ref{thm: F empty in RP2n}.
\end{proof}



\newcommand{\address}{Krzysztof  M. Pawa{\l}owski\\ 
                                         Adam Mickiewicz University in Pozna{\'n}\\ 
                                         ul. Uniwersytetu Pozna{\'n}skiego 4,  61-001 Pozna{\'n}, Poland\\
                                         \url{kpa@amu.edu.pl}}

\bigskip
\bigskip
\bigskip

\noindent
\author{\emph{Krzysztof M. Pawa{\l}owski}, e-mail: kpa@amu.edu.pl\\ Faculty of Mathematics and Computer Science\\
             Adam Mickiewicz University in Pozna{\'n}\\ ul. Uniwersytetu Pozna{\'n}skiego 4, 61-614 Pozna{\'n}, Poland}

\bigskip

\noindent
\author{\emph{Jan Pulikowski}, e-mail: pulik@amu.edu.pl\\ Faculty of Mathematics and Computer Science\\
             Adam Mickiewicz University in Pozna{\'n}\\ ul. Uniwersytetu Pozna{\'n}skiego 4, 61-614 Pozna{\'n}, Poland}

\end{document}